\def\cyr{%
\renewcommand\rmdefault{wncyr}%
\renewcommand\sfdefault{wncyss}%
\renewcommand\encodingdefault{OT2}%
\normalfont
\selectfont}
\DeclareTextFontCommand{\textcyr}{\cyr}
\newcommand{\rad}{\varrho}
\def\bcp{\mathbb C \mathbb P}
\def\CC{\mathbb C}
\def\HH{\mathbb H}
\def\Vol{\mbox{Vol }}
\def\eea{\end{eqnarray*}}
\newtheorem{main}{Theorem}
\newtheorem{thm}{Theorem}[section]
\newtheorem{lem}[thm]{Lemma}
\newtheorem{prop}[thm]{Proposition}
\newtheorem{cor}[thm]{Corollary}
\newtheorem{defn}{Definition}
\newenvironment{proof}{\medskip \noindent
{\bf Proof.}}{\hfill \rule{.5em}{1em}
\\}
\def\ZZ{{\mathbb Z}}
\def\RR{{\mathbb R}}
\def\CP{{\mathbb C \mathbb P}}
\begin{document}

\title{On Optimal $4$-Dimensional Metrics}

\author{Claude LeBrun\thanks{Supported 
in part by  NSF grant DMS-0604735.}    ~and Bernard Maskit 
  }

\date{November 1, 2007}
\maketitle

\begin{abstract}
We completely determine, up to homeomorphism, which simply connected 
compact oriented $4$-manifolds  admit scalar-flat, anti-self-dual Riemannian metrics. 
The key
new ingredient is a proof that  the connected sum $\overline{\CP}_2\# \overline{\CP}_2\# \overline{\CP}_2\# \overline{\CP}_2\# \overline{\CP}_2$
 of five reverse-oriented complex projective planes admits such metrics. 
\end{abstract}

\section{Introduction}

Marcel Berger \cite{bercent} credits the late  
 Ren\'e Thom  with  the following vague, but  fundamental,  question:
 \begin{quote}
 {\sl Does every smooth compact manifold admit a best metric?}
 \end{quote}
Berger eventually proposed a more precise version of  the problem by  asking 
which smooth 
compact $n$-manifoldsx $M$, $n\geq 3$,  admit  Riemannian  metrics $g$ 
which  are {\em as flat as possible}, in the sense that they minimize 
the scale-invariant functional 
$$
g~\longmapsto ~ 
{\mathcal K}(g) =  \int_{M}|{\mathcal R}_{g}|_{g}^{n/2}d\mu_{g}, $$
where ${\mathcal R}$ denotes the Riemann curvature tensor, 
$|{\mathcal R} |$ is its point-wise norm with respect to the metric, and
 $d\mu$ is the  $n$-dimensional  volume measure determined by the metric.
 The following terminology is then used to describe 
metrics which are ``best'' in this precise sense: 
  \begin{defn}\label{defopt} 
 Let $M$ be a smooth compact $n$-dimensional manifold, $n\geq 3$.
 A smooth Riemannian metric $g$ on $M$ is said to be an 
 {\em optimal metric} if it is an absolute minimizer of the above-defined 
 functional $\mathcal K$,
in the sense   that 
 $${\mathcal K}(\tilde{g}) \geq {\mathcal K}(g)$$
 for every smooth Riemannian metric $\tilde{g}$ on $M$. 
 \end{defn}

   Berger's  chief motivation  for  Definition \ref{defopt} seems to have been  
   that,  as explained  in \S  \ref{rud} below,  
   Einstein metrics on compact $4$-manifolds are
 optimal in this sense. But 
 this fact also    shows that  dimension $4$ enjoys 
 a  peculiar, privileged  status for the problem. Indeed, notice that, for any integer $m\geq 2$, 
 there is a {\em non-optimal}  Einstein metric $g$   on  $S^3 \times S^{m}$,
 given by the product of the standard `round' metrics of 
    radii $\sqrt{2}$ and $\sqrt{m-1}$, respectively. Since $g$ is not flat, we obviously havee 
    ${\mathcal K} (g) > 0$. However, there is a sequence  of 
homogeneous  metrics  on  $S^3 \times S^{m}$ with ${\mathcal K}\searrow  0$, 
as may be   constructed   by shrinking $g$
  along the fibers of the Hopf fibration  $S^3 \times S^{m}\to \CP_1 \times S^{m}$.
  This shows that there are non-optimal Einstein metrics in any dimension $\geq 5$. Since
  essentially the same argument also shows that the constant-curvature metric  on
  $S^3$ isn't optimal either, the very special status of dimension four is now manifest. 
  
  However, while every $4$-dimensional Einstein metrics is optimal, 
  not every  $4$-dimensional  optimal metric is Einstein. This
  fact, which  is of fundamental importance for our purposes here, 
   now merits a careful explanation. 
  
 \subsection{Optimal Metrics in Dimension Four}
 \label{rud} 
 
Why, then, is dimension four so special? To a large extent, this is because 
the bundle of $2$-forms on an oriented Riemannian $4$-manifold
$(M,g)$ can be invariantly decomposed as a direct sum
\begin{equation} 
\Lambda^2 = \Lambda^+ \oplus \Lambda^- , 
\label{deco} 
\end{equation}
of   the $(\pm 1)$-eigenspaces $\Lambda^\pm$ of the Hodge
$\star$ operator. 
Since the 
Riemann curvature tensor ${\mathcal R}$ may be thought of   as a linear map 
$\Lambda^2 \to \Lambda^2$,  the decomposition   (\ref{deco})
therefore  allows us to think of   ${\mathcal R}$ as consisting of four blocks: 
\begin{equation}
\label{curv}
{\mathcal R}=
\left(
\mbox{
\begin{tabular}{c|c}
&\\
$W_++\frac{s}{12}$&$\mathring{r}$\\ &\\
\cline{1-2}&\\
$\mathring{r}$ & $W_-+\frac{s}{12}$\\&\\
\end{tabular}
} \right) . 
\end{equation}
Here $W_\pm$ are the trace-free pieces of the appropriate blocks,
and  are  called the
self-dual and anti-self-dual Weyl curvatures, respectively; these pieces  
of the curvature tensor are {\em conformally invariant}, in the sense that 
they are unchanged if $g$ replaced by  $u^2g$, where $u$ is an arbitrary smooth positive
function. 
The scalar curvature  $s$ is understood to act by scalar multiplication,
whereas the   
     trace-free Ricci curvature
$\mathring{r}=r-\frac{s}{4}g$ 
acts on 2-forms by
$$\varphi_{ab} \longmapsto ~ {\textstyle\frac{1}{2}}\Big[
\mathring{r}_{a}^c{\varphi}_{cb} -
\mathring{r}_{b}^c{\varphi}_{ca}\Big].$$
In terms of this decomposition of the curvature tensor of an arbitrary metric
$g$ on a compact oriented $4$-manifold $M$, 
the  generalized Gauss-Bonnet theorem expresses the 
Euler characteristic of $M$ as 
\begin{equation}
\label{gb}
\chi (M) = \frac{1}{8\pi^2} \int_M \left(\frac{s^2}{24}+
 |W_+|^2+|W_-|^2-\frac{|\mathring{r}|^2}{2}  \right)d\mu_g , 
\end{equation}
while  the 
Hirzebruch signature theorem  allows us to express the 
signature of $M$ as 
\begin{equation}
\label{sig}
\tau (M) = \frac{1}{12\pi^2}\int_M  \left(
 |W_+|^2-|W_-|^2  \right)d\mu_g ~ .
\end{equation}
Since our curvature  functional  becomes  
$$
 {\mathcal K}(g) = \int_M |{\mathcal R}|^2d\mu_g = \int_M \left(\frac{s^2}{24}+
 |W_+|^2+|W_-|^2+\frac{|\mathring{r}|^2}{2}  \right)d\mu_g ~,
 $$
 in dimension $4$, 
we therefore have
$$
{\mathcal K}(g)  = 8\pi^2 \chi (M) + \int_M |\mathring{r}_g|^2d\mu_g~, 
$$
implying   Berger's observation that  any Einstein metric  $g$  
minimizes $\mathcal K$.
However, {\em not every optimal metric is Einstein}, even in dimension 
four. For example,  the Gauss-Bonnet and signature formul{\ae}
imply  that 
$$
{\mathcal K}(g)  =
 -8\pi^2 (\chi + 3\tau )(M) + 2\int_M \left(\frac{s^2}{24}+2|W_+|^2\right) d\mu_g 
 $$
so,  as was perhaps first  observed by Lafontaine \cite{laf}, 
 another class of 
minimizers is given by the 
metrics for which both $W_+$ and $s$ are identically
zero. We will indicate  the latter class of metrics
by means of the  following  terminology: 
 \begin{defn}
If $M$ is a smooth oriented $4$-manifold, a Riemannian metric $g$
on $M$ is  said to be {\em anti-self-dual} (or, for brevity,  {\em ASD}) 
if its self-dual Weyl curvature is identically zero: 
$$W_+\equiv 0.$$
A metric $g$ is  called 
{\em scalar-flat}  (or, more briefly, {\em SF}) 
if it satisfies
$$s\equiv 0.$$
Finally, we  say that $g$ is {\em scalar-flat anti-self-dual} (or {\em SFASD}) if it satisfies both of 
these conditions. 
\end{defn}

Notice that the ASD condition is conformally invariant; that is, if $g$ is ASD, so is 
$u^2 g$, for any positive function $u$. By contrast, if $g$ has scalar curvature
$s$, then $\hat{g}= u^2 g$ has scalar curvature $\hat{s}$ determined by 
$$\hat{s}u^3 = ( 6\Delta + s) u,$$
where $\Delta= - \nabla\cdot \nabla$  is the Laplace-Beltrami operator of $g$.
In particular, a conformal class on a compact manifold can contain at
most one scalar-flat metric, up to overall constant rescaling. 

An immediate corollary of the above ideas is that any 
 locally conformally flat, scalar-flat metric on a compact $4$-manifold 
 is optimal. This fact alone provides  many examples of 
non-Einstein optimal metrics on compact $4$-manifolds with infinite
fundamental group; for example, 
   the   product metric on $S^2 \times \Sigma$,
 where $\Sigma$ is a compact surface 
of genus $\geq 2$
 equipped with a choice of hyperbolic
(Gauss curvature $-1$)  metric, and 
where the $2$-sphere $S^2$ is taken to be equipped with its standard 
(Gauss curvature $+1$) metric.  However, examples produced by this trick
can never be  simply connected \cite{kuiper}.  Nonetheless, simply connected SFASD 
manifolds do exist in considerable profusion, and, as we shall  explain in the next
subsection,  the entire purpose of this paper is to provide a complete topological 
classification of those simply connected $4$-manifolds which 
admit optimal metrics of this special type.

\subsection{The Main Result}

The existence of an SFASD metric places tight constraints on 
the topology of a $4$-manifold. Indeed, the following result was pointed out
in \cite{loptimal}: 
 
\begin{prop}\label{baker} 
A   smooth compact  simply connected 
$4$-manifold $M$  admits scalar-flat anti-self-dual 
 metrics only if 
\begin{itemize}
\item $M$ is homeomorphic to $k \overline{\bcp}_2$ for some $k \geq 5$; or 
\item $M$ is diffeomorphic to $\bcp_2\#k \overline{\bcp}_2$ for some  $k \geq 10$; or else 
\item $M$ is diffeomorphic to $K3$.
\end{itemize}
\end{prop}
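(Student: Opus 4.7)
The strategy combines the topological identities \eqref{gb} and \eqref{sig} with a Weitzenb\"ock computation which, under the SFASD hypothesis, forces harmonic self-dual $2$-forms to be parallel, and then applies Donaldson, Freedman, and the Enriques--Kodaira classification case-by-case in $b_+(M)$.

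First, taking the combination $2\chi+3\tau$ from \eqref{gb} and \eqref{sig} and substituting $s\equiv 0$, $W_+\equiv 0$ gives
\[
2\chi(M) + 3\tau(M) \;=\; -\frac{1}{8\pi^{2}} \int_M |\mathring r|^2 \, d\mu_g \;\leq\; 0,
\]
with equality iff $g$ is Ricci-flat; using $\chi=2+b_++b_-$ and $\tau=b_+-b_-$, this reads $b_-\geq 5b_++4$. By the Berger--de~Rham holonomy classification, any simply connected compact Ricci-flat $4$-manifold is diffeomorphic to $K3$. Next, the standard Weitzenb\"ock formula $\Delta\omega = \nabla^{\ast}\nabla\omega - 2W_{+}(\omega,\cdot) + (s/3)\omega$ for self-dual $2$-forms reduces under the SFASD hypothesis to $\Delta\omega=\nabla^{\ast}\nabla\omega$, so every harmonic self-dual $2$-form is parallel. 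Since the orthogonal complement in $\Lambda^+$ of a parallel subbundle is again parallel, and any flat line bundle over a simply connected base is trivial, two parallel self-dual forms always beget a third, forcing $b_+\in\{0,1,3\}$; in the cases $b_+=1$ and $b_+=3$ the parallel forms equip $(M,g)$ with a K\"ahler, respectively hyperk\"ahler, structure.

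The three cases then conclude as follows. If $b_+=0$, the intersection form is negative definite, so Donaldson's theorem diagonalizes it as $(-1)^{\oplus k}$ and Freedman gives $M\approx k\overline{\bcp}_2$ topologically; the inequality forces $k\geq 4$, and $k=4$ is ruled out by the Ricci-flat/$K3$ dichotomy, so $k\geq 5$. If $b_+=1$, then $(M,g)$ is scalar-flat K\"ahler and simply connected, hence a rational surface by Enriques--Kodaira; the intersection form must be $\langle 1\rangle\oplus k\langle -1\rangle$ (the even alternative $\bcp_1\times\bcp_1$ violates the inequality), so $M$ is diffeomorphic to $\bcp_2\#k\overline{\bcp}_2$ with $k\geq 9$, and $k=9$ is again excluded Ricci-flatly, giving $k\geq 10$. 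If $b_+=3$, $g$ is hyperk\"ahler on a simply connected compact $4$-manifold, which forces $M$ to be diffeomorphic to $K3$.

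The main obstacle is the Weitzenb\"ock rigidity step: parlaying a pair of parallel self-dual forms into a full hyperk\"ahler triple (and thereby ruling out $b_+=2$ outright), and extracting from a single parallel self-dual form in the $b_+=1$ case a K\"ahler structure robust enough that Enriques--Kodaira together with the diffeomorphism rigidity of rational surfaces identifies $M$ smoothly as $\bcp_2\#k\overline{\bcp}_2$ rather than merely up to homeomorphism.
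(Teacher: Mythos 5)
Your argument is essentially the one the paper sketches and attributes to \cite{loptimal}: the Weitzenb\"ock formula under $s=W_+=0$ forces harmonic self-dual $2$-forms to be parallel, the combination of \eqref{gb} and \eqref{sig} gives $2\chi+3\tau\leq 0$ (i.e.\ $b_-\geq 5b_++4$), and Donaldson, Freedman, and surface classification dispose of the cases $b_+=0,1,3$. The one step that is wrong as stated is your appeal to the Berger--de~Rham holonomy classification to conclude that every simply connected compact Ricci-flat $4$-manifold is diffeomorphic to $K3$: that is \emph{not} a theorem (a compact Ricci-flat $4$-manifold could a priori have full holonomy $SO(4)$; ruling this out is an open problem), so your exclusions of $k=4$ and $k=9$ rest on a false premise. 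The repair is immediate from tools you already have in hand: in the equality case the metric satisfies $s=0$, $W_+=0$, \emph{and} $\mathring{r}=0$ simultaneously, so by \eqref{curv} the curvature operator annihilates $\Lambda^+$; hence $\Lambda^+$ is flat, and simple connectivity trivializes it by parallel (hence harmonic) sections, forcing $b_+=3$ and contradicting $b_+=0$ or $b_+=1$. This is precisely Lafontaine's observation, quoted in the paper, that equality forces the metric to be locally hyper-K\"ahler. A smaller gloss: in the $b_+=1$ case, ``scalar-flat K\"ahler and simply connected, hence rational by Enriques--Kodaira'' silently uses Yau's plurigenus-vanishing argument (vanishing total scalar curvature kills all plurigenera unless the surface is of Ricci-flat type), which is exactly the ingredient the paper cites from \cite{yauruled}; Enriques--Kodaira alone does not determine the Kodaira dimension.
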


The key point   \cite{lsd} is that a self-dual harmonic $2$-form on a compact   SFASD 
$4$-manifold must be parallel; thus, either the intersection form on $H^2$ must be negative-definite, 
or else   the metric must be  K\"ahler. Moreover, as observed by Lafontaine \cite{laf}, 
our Gauss-Bonnet-type formul{\ae} imply that any SFASD manifold  must
 satisfy  $2\chi + 3\tau  \leq 0$, with equality only if
 the metric is locally hyper-K\"ahler. The Proposition therefore
 follows from the 
  celebrated results of Donaldson 
\cite{donaldson}
and Freedman \cite{freedman} on the topology of smooth $4$-manifolds,
together with an elegant plurigenus vanishing argument due to Yau \cite{yauruled}. 

\bigskip 
 
 \noindent 
 The purpose  of the present paper is to prove a
 near-converse of the above Proposition: 
 
 \bigskip 
 
 \begin{main}\label{able}
 A   smooth compact 
 simply connected  $4$-manifold $M$  admits scalar-flat anti-self-dual 
 metrics if 
 \begin{itemize}
 \item  $M$ is diffeomorphic to  $k \overline{\bcp}_2$ for  some $k  \geq 5$; or 
\item $M$ is diffeomorphic to $\bcp_2\#k \overline{\bcp}_2$ for some  $k \geq 10$; or  
\item $M$ is diffeomorphic to $K3$.
\end{itemize}
In particular, each of these manifolds carries an optimal metric. 
 \end{main}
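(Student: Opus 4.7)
The plan is to reduce the theorem to a single genuinely new construction, by disposing of the other two cases with off-the-shelf results. For $K3$, Yau's solution of the Calabi conjecture produces a Ricci-flat K\"ahler (in fact hyper-K\"ahler) metric, which is automatically scalar-flat and anti-self-dual with respect to the orientation opposite to the complex one. For $\bcp_2\#k\overline{\bcp}_2$ with $k\geq 10$, I would invoke the known construction of scalar-flat K\"ahler metrics on such complex surfaces, realized as suitable blow-ups of minimal ruled surfaces (Kim--LeBrun--Pontecorvo, Rollin--Singer); since a K\"ahler metric in dimension four satisfies $|W_+|^2 = s^2/24$, scalar-flatness forces $W_+\equiv 0$ with respect to the complex orientation, which gives the required SFASD metric on the underlying oriented $4$-manifold.

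This leaves only the family $k\overline{\bcp}_2$ with $k\geq 5$, which is the real content of the theorem. I would first reduce to the single case $k=5$ by means of a twistor-theoretic connected-sum gluing: starting with an SFASD metric on an oriented $4$-manifold $M$ (with infinitesimal deformations sufficiently well-behaved), a Donaldson--Friedman/Floer-type argument, refined so as to track both the conformal class and the scalar-curvature equation, should deliver an SFASD metric on $M\#\overline{\bcp}_2$. Iterating this blow-up operation from the seed $k=5$ would then supply SFASD metrics on $k\overline{\bcp}_2$ for every $k\geq 5$.

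The essential new input is therefore an SFASD metric on $5\overline{\bcp}_2$ itself, and this is where I expect virtually all of the difficulty to live. Guided by the second author's expertise in Kleinian groups, the natural strategy is to realize $5\overline{\bcp}_2$ as a quotient $\Omega/\Gamma$, where $\Gamma\subset \mathrm{Conf}(S^4)$ is a convex-cocompact discrete group --- most likely generated by inversions in a carefully chosen configuration of round $3$-spheres --- and $\Omega\subset S^4$ is its domain of discontinuity. The quotient then inherits a locally conformally flat conformal structure, for which $W_+\equiv 0$ holds trivially, so every representative is ASD. A scalar-flat representative would be obtained by solving the conformal Laplacian equation $(6\Delta + s)u = 0$ for a positive function $u$ on $\Omega/\Gamma$. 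The main obstacle, in my view, is the \emph{topological} step: engineering the sphere configuration so that $\Omega/\Gamma$ is a smooth, closed, simply connected $4$-manifold of the exact diffeomorphism type $5\overline{\bcp}_2$ --- producing $b^- = 5$ while collapsing $\pi_1$ is a tight constraint precisely at the borderline $k=5$ permitted by Proposition~\ref{baker}. The accompanying \emph{analytic} step, namely checking that the Yamabe invariant is non-positive and that the conformal Laplacian admits a positive null eigenfunction, should then follow by more standard conformal-geometric arguments once the right group $\Gamma$ is in hand.
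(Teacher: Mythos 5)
Your treatment of the $K3$ and $\bcp_2\# k\overline{\bcp}_2$ cases matches the paper exactly, but your strategy for the essential case $k\overline{\bcp}_2$ contains a fatal flaw. You propose to realize $5\overline{\bcp}_2$ directly as a Kleinian quotient $\Omega/\Gamma$, thereby endowing it with a locally conformally flat structure. This is impossible: by Kuiper's theorem (cited in the paper precisely to make this point), a compact simply connected locally conformally flat manifold is conformally diffeomorphic to the round $S^4$. No configuration of inversion spheres, however ingenious, can produce a simply connected quotient with $b_2=5$. The actual construction must therefore leave the conformally flat world. What the paper does is uniformize not a manifold but an \emph{orbifold} $V=(S^4/\ZZ_2)\#(S^4/\ZZ_\ell)$ as a quotient of the discontinuity domain of $\ZZ_2\ast\ZZ_\ell\hookrightarrow PSL(2,\CC)$, and then desingularize by gluing ALE scalar-flat K\"ahler spaces (Eguchi--Hanson, Gibbons--Hawking, and LeBrun line-bundle metrics) into the orbifold points via a Floer/Kovalev--Singer gluing theorem. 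The resulting metrics on $(\ell+2)\overline{\bcp}_2$ are anti-self-dual but no longer conformally flat; the Kleinian-group input survives only as the conformal geometry of the ``body'' of the connected sum. Making this work requires an unobstructedness (vanishing) theorem for the deformation theory of the ALE pieces, which is a substantial technical component you have not anticipated.

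Your two auxiliary steps are also problematic. First, the proposed reduction to $k=5$ by iterated blow-up (``SFASD on $M$ implies SFASD on $M\#\overline{\bcp}_2$'') is not an available off-the-shelf result: ASD gluing yields an ASD conformal class, but whether that class contains a scalar-flat representative depends on the sign of the lowest eigenvalue of the Yamabe Laplacian, which a small perturbation of a scalar-flat metric can push to either sign. Second, for the same reason, ``checking that the conformal Laplacian admits a positive null eigenfunction'' for a single conformal class is not a routine verification --- it is the crux. The paper resolves both issues at once by producing, for each $\ell\geq 3$ uniformly, a \emph{connected family} of ASD conformal classes on $(\ell+2)\overline{\bcp}_2$ interpolating between one of positive and one of negative scalar curvature; this is achieved by quasiconformally deforming the Kleinian group so that the Hausdorff dimension of its limit set crosses $1$ (invoking Schoen--Yau/Nayatani), and then applying the intermediate value theorem to the lowest Yamabe eigenvalue along the family to locate a scalar-flat class. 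Without some such mechanism for forcing the eigenvalue through zero, your outline cannot be completed.
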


Combining  Proposition \ref{baker} and Theorem \ref{able}, we thus have:

\begin{cor} A  compact  simply connected topological $4$-manifold $M$ carries  a
smooth structure for which there is a compatible SFASD  metric
$g$ iff 	$M$ is homeomorphic to $K3$, $\bcp_2\#k \overline{\bcp}_2$,   $k \geq 10$, or 
$k \overline{\bcp}_2$,  $k  \geq 5$. 
\end{cor}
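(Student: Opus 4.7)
The corollary is a direct logical combination of Proposition \ref{baker} with Theorem \ref{able}, so my real task is to sketch a proof of the latter; the corollary then follows instantly, since by Freedman's theorem two simply connected compact smooth $4$-manifolds with isomorphic intersection forms are homeomorphic. I would split Theorem \ref{able} into its three bulleted families and dispatch them in order of increasing difficulty.

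The $K3$ case is immediate from Yau's solution of the Calabi conjecture: the vanishing first Chern class yields a Ricci-flat K\"ahler metric, which is automatically hyper-K\"ahler and therefore SFASD with respect to the reversed complex orientation. For $\bcp_2 \# k\overline{\bcp}_2$ with $k \geq 10$, my plan is to invoke the existence of scalar-flat K\"ahler metrics on these surfaces; the threshold $k\geq 10$ is exactly what the Lafontaine constraint $2\chi + 3\tau \leq 0$ requires, and constructions of LeBrun, Kim-LeBrun-Pontecorvo, and Rollin-Singer produce such metrics in this regime by gluing Burns-type scalar-flat K\"ahler ALE models to a scalar-flat K\"ahler base. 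With orientation reversed, any such metric is SFASD.

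The truly substantive case is the family $k\overline{\bcp}_2$ with $k\geq 5$, and everything hinges on the base case $k=5$. Granted an SFASD metric on $5\overline{\bcp}_2$, every $k \geq 6$ would be treated inductively by a Floer or Donaldson-Friedman style twistor gluing: grafting a reversed-orientation Eguchi-Hanson instanton at a generic point of an SFASD manifold $M$ produces an approximately ASD metric on $M\# \overline{\bcp}_2$, which one then perturbs to an exact ASD conformal class by the implicit function theorem in weighted Sobolev spaces. Since $b^+=0$, the deformation-theoretic obstruction vanishes, and a scalar-flat representative is extracted within the new conformal class by solving the conformal Laplacian, the Yamabe estimate being inherited from the base.

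The main obstacle, and the new content advertised in the abstract, is the base case $5\overline{\bcp}_2$. My plan would be to realize this manifold as the Eguchi-Hanson desingularization of a conformally flat orbifold. Concretely, one seeks a discrete subgroup $\Gamma \subset \mathrm{Conf}(S^4)$, generated by elements of order two, for which the quotient $S^4/\Gamma$ is conformally flat away from exactly five isolated singular points each of type $\mathbb{R}^4/\{\pm 1\}$; Maskit's combination theorems for Kleinian groups are the natural tool for assembling such a $\Gamma$ with precisely the right number of cone points and the correct topology of the smooth locus. Desingularizing each orbifold point with a standard Eguchi-Hanson ALE space produces an approximately ASD metric on $5\overline{\bcp}_2$, which a weighted-Sobolev perturbation promotes to a genuine SFASD conformal class. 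I expect the principal difficulties to be, first, verifying that the Kleinian combinatorics yields exactly the smooth diffeomorphism type $5\overline{\bcp}_2$ rather than some homeomorphic but distinct smoothing; and second, closing the perturbation argument at the borderline value $k=5$, where the moduli of ASD conformal structures is least generic and the relevant obstruction spaces are tightest.
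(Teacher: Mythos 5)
Your reduction of the Corollary to Proposition \ref{baker} plus Theorem \ref{able} is exactly what the paper does, and your treatment of the $K3$ and $\bcp_2\#k\overline{\bcp}_2$ cases matches the paper (Yau, Kim--LeBrun--Pontecorvo, Rollin--Singer). But your sketch for $k\overline{\bcp}_2$ has two genuine gaps. The decisive one is scalar-flatness: an ASD conformal class contains a scalar-flat representative only if the lowest eigenvalue of its Yamabe Laplacian $\Delta + s/6$ is exactly zero, and a generic ASD class produced by gluing will have this eigenvalue strictly positive or strictly negative. You cannot ``extract a scalar-flat representative by solving the conformal Laplacian.'' This is precisely why the paper must produce a \emph{connected family} of ASD conformal classes whose Yamabe eigenvalue changes sign and then invoke the intermediate value theorem; and producing that family is the real work of the paper. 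It requires deforming the uniformizing Kleinian group $\ZZ_2 \ast \ZZ_\ell$ so that the Hausdorff dimension of its limit set runs from near $0$ to near $2$ (Section \ref{moduli}: a boundary group of the first kind with no parabolics, plus the Bishop--Jones lower semicontinuity of $\dim\Lambda$), combined with the Schoen--Yau/Nayatani dichotomy relating $\dim\Lambda$ to the sign of the scalar curvature, and finally a Joyce-style argument showing the glued metrics inherit both signs. None of this appears in your proposal.

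The second gap is topological: an orbifold $S^4/\Gamma$ with exactly five singular points of type $\RR^4/\{\pm1\}$ cannot resolve to $5\overline{\bcp}_2$. Each Eguchi--Hanson plug contributes an exceptional sphere of self-intersection $-2$, so the exceptional classes span a rank-$5$ sublattice of discriminant $-2^5=-32$ inside $H_2$; if the resolution were $5\overline{\bcp}_2$, whose intersection form is unimodular of rank $5$, the index of this sublattice would have to be $\sqrt{32}$, which is absurd. (This is why resolving $m$ copies of $S^4/\ZZ_2$ always yields an \emph{even} number of $\overline{\bcp}_2$'s.) The paper instead takes $V=(S^4/\ZZ_2)\#(S^4/\ZZ_\ell)$ with $\ell=3$, which has two $\ZZ_2$ points and two $\ZZ_3$ points of opposite chirality, resolved respectively by Eguchi--Hanson spaces, the $A_{2}$ Gibbons--Hawking space, and the LeBrun metric on the $\mathcal{O}(-3)$ line bundle, giving $5\overline{\bcp}_2$ directly (and $(\ell+2)\overline{\bcp}_2$ for all $\ell\geq 3$ uniformly, so no induction on $k$ is needed). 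Two smaller points: grafting an Eguchi--Hanson space at a \emph{smooth} point does not produce $M\#\overline{\bcp}_2$ (its end is $S^3/\ZZ_2$, not $S^3$); and unobstructedness of the gluing is not a consequence of $b^+=0$ --- it requires the Eastwood--Singer argument for the conformally flat pieces and the vanishing theorem for ALE scalar-flat K\"ahler spaces (Theorem \ref{sesame}), whose previously published proof the paper repairs.
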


 It is still  unknown whether any $k \overline{\bcp}_2$ 
 admits non-standard smooth structures, so the degree to which 
 Theorem \ref{able} represents a true  converse of Proposition \ref{baker}
remains poorly  understood. 
Note, however, that 
each of the topological manifolds $K3$ and  $\bcp_2\#k \overline{\bcp}_2$,   $k \geq 10$,
admits infinitely many exotic smooth structures, and  Proposition 
\ref{baker} asserts that there can never be an SFASD metric compatible with any of these. 
Moreover, for large classes  of 
such   exotic smooth structures, related arguments even    show \cite[Theorem B]{loptimal}
that these manifolds do not admit 
compatible {optimal} metrics of {\em any} kind.

Now, as indicated above,  the proof of  Proposition \ref{baker} hinges on the fact that 
any self-dual harmonic $2$-form 
on an SFASD $4$-manifold must be parallel. For this reason, 
the only such metrics on $K3$ are hyper-K\"ahler, and the corresponding
existence assertion in Theorem \ref{able} therefore follows
from Yau's solution \cite{yauma} of the Calabi conjecture. Similarly, 
the   SFASD metrics on $\bcp_2\#k \overline{\bcp}_2$ are precisely the scalar-flat
K\"ahler metrics on these spaces; for $k\geq 14$, the existence of 
such metrics was first shown by Kim, Pontecorvo, and the first author \cite{klp},
a result which was later improved to $k\geq 10$ by Rollin and Singer \cite{rollsing}. 
On the other hand, the existence of SFASD metrics on $k\bcp_2$, $k\geq 6$,
was first shown in \cite{loptimal}. The main goal of the present paper is therefore
to improve the last assertion in order to include the case of $k=5$;  however, 
we will in fact obtain a  simple, 
unified construction of such metrics for all $k\geq 5$ at no added cost.  We now begin by providing a brief overview 
of the entire construction.

\subsection{Strategy of the Proof}
\label{stratego}

In order to prove Theorem \ref{able}, we begin by choosing some integer $\ell \geq 3$, 
and then consider the two oriented conformally-flat orbifolds 
given by 
$S^4/\ZZ_2$ and $S^4/\ZZ_\ell$,
 where the relevant actions of 
$\ZZ_2$  and $\ZZ_\ell$ on the quaternionic projective line  ${\mathbb H \mathbb P}_1=S^4$
are respectively generated by 
$$
\left(\begin{array}{cc}1 & 0  \\0 & -1 \end{array}\right)
~~~~~~\mbox{and}~~~~~~
\left(\begin{array}{cc}1 & 0  \\0 & e^{2\pi i/\ell}  \end{array}\right).
$$
Both of these orbifolds have two singular points, corresponding to 
$[1:0]$ and $[0:1] \in {\mathbb H \mathbb P}_1$. 
However, while the two singularities of $S^4/\ZZ_2$ 
are on an equal footing, the two singularities of $S^4/\ZZ_\ell$  are 
instead 
mirror reflections of each other, corresponding to local actions on 
the quaternions 
$\HH= \RR^4$   respectively generated  by
$$
q\longmapsto e^{2\pi i/\ell} q
~~~~~~\mbox{and}~~~~~~
q\longmapsto q e^{-2\pi i/\ell} .    
$$

We now form the connected sum of these two orbifolds by 
removing a small  round ball from  the non-singular region of each, and then 
identifying the resulting $S^3$-boundaries via a reflection:  
\begin{center}
\mbox{
\beginpicture
\setplotarea x from 0 to 290, y from 0 to 60
\ellipticalarc  axes ratio 3:2 90 degrees from 110 60 
center at 155 30
\ellipticalarc  axes ratio 3:2  -35 degrees from 110 60 
center at 65 30
\ellipticalarc  axes ratio 3:2  35 degrees from 110 0 
center at 65 30
\circulararc  -90 degrees from 170 60 
center at 140 30
\circulararc  35 degrees from 170 60 
center at 200 30
\circulararc  -35 degrees from 170 0 
center at 200 30
{\setquadratic 
\plot 120 40    145 35    165 40   /
\plot 120 20    145 25    165 20  /
}
\endpicture
}
\end{center}
The resulting  orbifold $V=(S^4/\ZZ_2)\# (S^4/\ZZ_\ell)$ is thereby endowed with 
an anti-self-dual  orbifold conformal structure --- indeed, with a locally conformally flat one. 

Now each  singularity of  the orbifold $V$
 looks like one familiar from the 
theory of complex algebraic surfaces \cite{bpv}, and  each therefore has a 
 so-called  minimal resolution. We may thus  desingularize $V$
to  obtain a smooth $4$-manifold. 
Strictly on the  level of smooth topology, 
 this process amounts to 
 replacing a neighborhood $B^4/\Gamma$ of  each singular point
 with a standard $4$-manifold plug bounded by  the same Lens space $S^3/\Gamma$. 
For the two 
$\Gamma = \ZZ_2$ singularities, the interior of our plug is just the 
degree $-2$ complex line bundle over $S^2 = \bcp_1$. Similarly, 
for the  singularity arising from the action of $\Gamma= \ZZ_\ell$ generated by 
$$q \mapsto e^{2\pi i/\ell} q, $$
the interior of our plug is just the degree $-\ell$ complex line bundle over $S^2$. 
  Finally, for  the 
  singularity arising from the action of $\Gamma= \ZZ_\ell$ generated by 
$$q \mapsto q e^{-2\pi i/\ell} , $$
the interior of  
our plug is a union of $\ell-1$ copies of the 
degree $-2$ complex line bundle over $S^2$,
plumbed together  
\begin{center}
\mbox{
\beginpicture
\setplotarea x from 0 to 200, y from -10 to 40
\put {$\ddots$} [B1] at 130 10
{\setlinear
\plot   0 0  40 30  /
\plot   20 30  60 0   /
\plot   40 0  80 30  /
\plot   60 30  100 0   /
\plot   80 0  120 30   /
\plot   140 0  180 30   /
\plot   160 30  200  0   /
}
\endpicture
}
\end{center}
in the manner  dually indicated  by 
 the 
Dynkin diagram $A_{\ell -1}$: 
 \begin{center}
\mbox{
\beginpicture
\setplotarea x from 0 to 100, y from -10 to 10 
\put {$\bullet$} [B1] at 0 0 
\put {$\bullet$} [B1] at 15 0 
\put {$\bullet$} [B1] at 30 0 
\put {$\bullet$} [B1] at 45 0 
\put {$\bullet$} [B1] at 60 0 
\put{$\cdots$}[B1] at 73 0
\put {$\bullet$} [B1] at 85 0 
\put {$\bullet$} [B1] at 100 0 
{\setlinear  
\plot 0  3 60 3 /
\plot 85  3 100 3 /
}
\endpicture
}
\end{center}
Now it is  known \cite{mcp2} that if this desingularization process is separately 
applied to $S^4/\ZZ_2$ and $S^4/\ZZ_\ell$, the 
resulting manifolds are respectively  $2\overline{\bcp}_2$ and  $\ell\overline{\bcp}_2$.
Thus, the $4$-manifold we obtain from $V= (S^4/\ZZ_2)\# (S^4/\ZZ_\ell)$
by this process will exactly be  $M= (\ell +2) \overline{\bcp}_2$. In particular, 
the $\ell = 3$ case will gives us the manifold $M= 5 \overline{\bcp}_2$,    where 
the existence of  optimal metrics is the key  issue at stake. 

Now each  plug we have used to replace a singularity 
admits 
an asymptotically locally Euclidean (ALE) scalar-flat anti-self-dual 
(SFASD) metric \cite{EH,gibhawk,hitpoly,lpa}. 
The ALE property means that the complement of a compact
set is diffeomorphic to 
 $(\RR^4-B^4)/\Gamma$ in such a manner that the metric 
takes the form 
$$g_{jk} =  \delta_{jk} + O (\varrho^{-2}),$$
where $\varrho$ is the Euclidean radius.
Since multiplying such a metric   by a tiny positive number therefore yields 
 a space whose geometry is macroscopically indistinguishable from 
the flat orbifold  $\RR^4/\Gamma$, one might thus hope to 
find  anti-self-dual metrics on  $M= (\ell +2) \overline{\bcp}_2$ by 
grafting these ALE metrics onto the conformally flat conformal structure
of the non-singular region of $V$, and then perturbing the grafted metric
so as to once again solve the anti-self-duality equation. As we shall see, 
this does indeed work. Our approach to this aspect
follows  the direct analytic approach first proposed  by 
Floer \cite{floer,kovsing,rollsing,tasd}. We remark  in passing that 
the alternative  of  a twistor approach \cite{AHS,pnlg} to this problem
 might seem particularly  tempting, but that the requisite 
  stack-theoretic generalization of the 
 Donaldson-Friedmann
construction  \cite{DF} has yet to be put on a solid footing;  however, see 
\cite{klp,lebsing2,zhouthesis} for   evidence  that such an approach 
should indeed  be tractable.  Note that this gluing procedure depends
on a  general vanishing result (Theorem \ref{sesame})
which had previously been asserted in  \cite[Theorem 8.4]{kovsing};
here, our contribution is  to point out and repair a    gap in the earlier proof.

Now, we have already noted that $V$ carries a conformally flat structure. However, 
this conformally flat structure is far from unique. To the contrary, 
 as we will see in \S \ref{moduli} below, the theory of 
 Kleinian groups \cite{maskit} provides us with a connected family of conformally flat structures
 for which the corresponding limit sets have  Hausdorf dimension 
 varying from nearly zero to nearly two. By a result of Schoen-Yau \cite{schyaudim}, 
 this implies that some of these conformal structures are represented by 
 metrics of positive scalar curvature, while others are represented by   metrics of 
negative scalar curvature. The punch line of our story 
is that the above gluing construction can be carried out uniformly
in these additional parameters, and,  setting 
 $k=\ell+2$, a gluing argument 
  suggested by the work of  Dominic Joyce \cite{joyscal}   allows us  to show 
the existence of $1$-parameter  family of ASD metrics on $M=k \overline{\bcp}_2$ 
for which the scalar curvature changes sign in an analogous  manner:

\begin{main}\label{charlie}
For any integer $k\geq 5$, the smooth  oriented $4$-manifold 
$$k\overline{\bcp}_2=\underbrace{\overline{\bcp}_2\# \cdots \# \overline{\bcp}_2}_k$$ 
admits
a smooth $1$-parameter family of anti-self-dual  conformal metrics $[g_t]$, 
$t\in [-1,1]$,
such that $[g_1]$ contains a metric of positive scalar curvature, while
$[g_{-1}]$ contains a metric of negative scalar curvature.
\end{main}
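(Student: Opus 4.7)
The plan is to execute, with careful attention to parameters, the scheme already sketched in \S\ref{stratego}, setting $\ell=k-2\geq 3$.

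Step 1 is to produce the base family of orbifold conformal structures. I start with the orbifold $V=(S^4/\ZZ_2)\#(S^4/\ZZ_\ell)$ and invoke the Kleinian-group construction of \S\ref{moduli} to obtain a smooth 1-parameter family $[h_t]$, $t\in[-1,1]$, of conformally flat orbifold conformal structures on $V$. The Kleinian groups are chosen so that the Hausdorff dimension of the limit set varies continuously from nearly $0$ at $t=1$ to nearly $2$ at $t=-1$. By the Schoen--Yau theorem \cite{schyaudim}, $[h_1]$ then contains an orbifold metric of positive scalar curvature and $[h_{-1}]$ contains one of negative scalar curvature.

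Step 2 is the grafting. At each orbifold point of $V$ I excise a small ball of radius $\varepsilon$ and glue in a scaled copy of the appropriate ALE SFASD model: Eguchi--Hanson at a $\ZZ_2$-singularity, the LeBrun negative-line-bundle metric on $\mathcal{O}(-\ell)$ at the $\ZZ_\ell$-singularity of cyclic type, and the Gibbons--Hawking multi-instanton $A_{\ell-1}$-metric at the mirror-type $\ZZ_\ell$-singularity. Carried out uniformly in $t$, this yields a two-parameter family of approximate SFASD metrics $\tilde g_{t,\varepsilon}$ on the resolved manifold $M=(\ell+2)\overline{\bcp}_2=k\overline{\bcp}_2$, with anti-self-dual Weyl curvature of size $O(\varepsilon^{2})$ concentrated in the gluing necks.

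Step 3 is the perturbation to a genuine ASD conformal class, in the spirit of Floer \cite{floer,kovsing,rollsing,tasd}. I linearize the anti-self-duality operator $W_+$ at $\tilde g_{t,\varepsilon}$ in a suitable weighted H\"older or Sobolev setting adapted to the necks, and solve $W_+(\tilde g_{t,\varepsilon}+h)=0$ by a contraction-mapping argument. The essential input is the orbifold vanishing statement Theorem \ref{sesame}, which asserts that the obstruction space to deforming the orbifold ASD structure is trivial; combined with the known triviality of obstructions on each ALE model, a standard gluing argument then gives a uniformly bounded right inverse for the linearization at $\tilde g_{t,\varepsilon}$ once $\varepsilon$ is small. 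The main obstacle, and the step that genuinely needs work, is verifying that these estimates are \emph{uniform} in the Kleinian parameter $t\in[-1,1]$: the injectivity constants for the linearized operator on $(V,[h_t])$ and the constants in the neck cutoff estimates must not degenerate as $t$ varies. One must check that the family of Kleinian structures can be constructed so that this uniformity holds, at which point the implicit function theorem produces a smooth family $[g_t]$ of ASD conformal classes on $k\overline{\bcp}_2$.

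Step 4 is to control the sign of the scalar curvature at the endpoints. Because each ALE piece is itself scalar-flat, the Yamabe constant of $[\tilde g_{t,\varepsilon}]$ is $C^0$-close to that of $[h_t]$ as $\varepsilon\to 0$; this is the Joyce-type observation exploited in \cite{joyscal}. In particular, at $t=1$ the approximate conformal class has positive Yamabe invariant, and at $t=-1$ it has negative Yamabe invariant, both uniformly in $\varepsilon$ small. Since the ASD correction $h$ produced in Step 3 is arbitrarily small in a norm that controls the scalar curvature, the sign of the Yamabe invariant of $[g_t]$ persists at $t=\pm 1$, giving the desired metrics of positive and negative scalar curvature in $[g_1]$ and $[g_{-1}]$, and completing the proof.
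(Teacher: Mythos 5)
Your Steps 1--3 track the paper's actual route: the Kleinian deformation family of \S\ref{moduli} with limit sets of Hausdorff dimension running from near $0$ to near $2$, Lemma \ref{schauen} to get the signs at the endpoints, and then the grafting of the Eguchi--Hanson, Gibbons--Hawking and LeBrun ALE pieces, with Proposition \ref{vgood} and Theorem \ref{sesame} supplying the unobstructedness needed to invoke the Floer/Kovalev--Singer gluing theorem (Proposition \ref{gluing}). The paper simply quotes that theorem rather than re-running the weighted-space contraction argument, so your worry about uniformity of the right inverse in $t$ is handled there by the fact that the glued conformal structures are \emph{smoothly parameterized} by the (connected) space of unobstructed structures on the pieces together with the gluing data; that part of your proposal is fine. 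The negative-curvature endpoint is also handled as you suggest, by a cutoff test function supported in a compact set where the glued metrics converge in $C^2$ to the orbifold metric with $s<-1$.

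The genuine gap is in Step 4 at the \emph{positive} endpoint. You assert that the Yamabe constant of the glued class is $C^0$-close to that of $[h_t]$ as $\varepsilon\to 0$, "uniformly in $\varepsilon$ small." But the convergence furnished by the gluing theorem is only $C^2$ on compact subsets of the complement of the singular points, and this gives control of the Yamabe functional only for test functions supported in such compact sets. That is exactly enough to propagate \emph{negativity} (one exhibits a single bad test function), but not positivity: a test function on the glued manifold may concentrate in the necks or on the ALE pieces, and the lowest eigenvalue of the Yamabe Laplacian is not continuous under this degeneration in the direction you need. The paper explicitly flags the direct Joyce-type argument for positivity as "technically quite delicate" and takes an entirely different route: it observes that some metrics of the family arise as \emph{ordinary} connected sums $N\# N_\ell$, where $N=V_3\#_{\ZZ_2}V_4\cong 2\overline{\bcp}_2$ and $N_\ell=V_5\#_{\ZZ_\ell}V_6\cong\ell\overline{\bcp}_2$ carry the explicit hyperbolic-ansatz ASD metrics of \cite{mcp2}, which are known to admit positive-scalar-curvature representatives; positivity of the connected sum then follows from the Donaldson--Friedman construction together with Atiyah's theorem on Penrose transforms of Yamabe Green's functions \cite{loptimal,kalafat}. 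This detour also supplies the identification $M_\ell\cong(\ell+2)\overline{\bcp}_2$, which you assert but do not justify. To repair your argument you would either need to carry out the delicate Joyce-type analysis of the Yamabe eigenvalue across the necks, or substitute the paper's connected-sum argument for the positive endpoint.
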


Now, because the lowest eigenvalue of the Yamabe Laplacian depends continuously 
on $t$, it is then easy to show that there must be  some $t\in [-1,1]$
such that $[g_t]$ contains a metric with scalar curvature $s\equiv 0$. 
In particular, this then shows that,  for any $k\geq 5$,  the 
$4$-manifold  $M=k\overline{\bcp}_2$ carries
a scalar-flat anti-self-dual metric.   In light of our previous discussion, 
Theorem \ref{able}  is therefore an immediate 
consequence.

\section{Uniformizing the Orbifold $V$} \label{uniform} 

The orbifold $V$ of \S \ref{stratego} is actually expressible as a global quotient 
$X/D_{\ell}$, where the locally conformally flat $4$-manifold 
$$X= \underbrace{(S^3\times S^1) \# \cdots \# (S^3\times S^1)}_{\ell -1}$$
may be  constructed by connecting two $4$-spheres with $\ell$ tubes
\begin{center}
\mbox{
\beginpicture
\setplotarea x from 0 to 300, y from -20 to 140
\circulararc  172 degrees from 170 105 
center at 150 100
\circulararc  -120 degrees from 170 95 
center at 150 100
\circulararc  -172 degrees from 170 15 
center at 150 20
\circulararc  120 degrees from 170 25 
center at 150 20
\circulararc  -154 degrees from 170 105 
center at 160 60
\circulararc  -149 degrees from 170 95 
center at 160 60
\ellipticalarc  axes ratio 2:3  122  degrees from 130 100
center at 145 60
\ellipticalarc  axes ratio 2:3  140 degrees from 140 105
center at 150 60
\ellipticalarc  axes ratio 1:2  140 degrees from 145 95
center at 150 60
\ellipticalarc axes ratio 1:2 140 degrees from 225 65
center at 225 60
\ellipticalarc axes ratio 1:2 140 degrees from 225 55
center at 225 60
\arrow <2pt> [.1,.3] from 225 55 to 225 56
\arrow <2pt> [.1,.3] from 225 65 to 225 64
\ellipticalarc axes ratio 3:1 70 degrees from  150 140
center at 150 142 
\ellipticalarc axes ratio 3:1 60 degrees from  155 143
center at 150 142 
\ellipticalarc axes ratio 3:1 110 degrees from  147 143.7
center at 150 142 
{\setdashes
\plot 150 120 150 140 /
\plot 150 80 150 40 /
\plot 150 0 150 -20 /
\plot 180 60 200 60 /
\plot 223 60 205 60 /
}
\endpicture
}
\end{center}
and where the action of the dihedral group 
$D_\ell$  on $X$ is then generated by a cyclic permutation of the 
handles and by an interchange of  the two $4$-spheres. This can be seen  most directly 
by first recognizing $V= (S^4/\ZZ_2) \# (S^4/\ZZ_\ell)$ as a $\ZZ_\ell$-quotient 
of $S^4 \# \ell   (S^4/\ZZ_2)$, and then observing that this latter space can
in turn be thought of  as $X/\ZZ_2$. 

\subsection{The Kleinian Group Picture} \label{naive} 

Let us now explicitly realize $X$ and $V$ as quotients of 
regions of the $4$-sphere by groups of M\"obius transformations. 
Clearly,  a fundamental region in the universal cover of $X$ can 
be taken to be  $S^4= \HH\cup \{ \infty\}$   minus the $\ell$  open balls
of radius $\varepsilon$ centered at the $\ell^{\rm th}$ roots of unity 
in $\CC\subset \HH$, 
together with the inversion of this complement into the $\varepsilon$-ball centered at $1$:
\begin{center}
\mbox{
\beginpicture
\setplotarea x from 0 to 300, y from 0 to 120
\circulararc  360 degrees from 176 64
center at 176 62.5
\circulararc  360 degrees from 175.9 56
center at 176 57.5
\circulararc  360 degrees from 135 101
center at 135 86
\circulararc  360 degrees from 135 49
center at 135 34
\setdashes 
\circulararc  360 degrees from 180 75 
center at 180 60
\endpicture
}
\end{center}
The universal cover $\tilde{X}$ of $X$ is thus realized as $S^4$ minus a 
 Cantor set  $\Lambda\subset \HH$, where $\pi_1(X) = \ZZ \ast \cdots \ast \ZZ$ 
acts  via a certain representation $\pi_1(X) \to PL (2, \HH )$.  Similarly, we have 
$V= \tilde{X}/(\ZZ_2\ast \ZZ_\ell)$, where $\ZZ_\ell$
is generated by the rotation $$\beta(q)= e^{2\pi i/\ell}q,$$ and where
$\ZZ_2$ is generated by the inversion $$\alpha (q) = 1+ \varepsilon^2 (q-1)^{-1}.$$
Note that $\pi_1 (X)$ is the subgroup of $\ZZ_2\ast \ZZ_\ell$ generated by
$(\alpha \beta)^2$ and its conjugates, and so is exactly the kernel of 
the obvious surjective homomorphism $\ZZ_2\ast \ZZ_\ell\to D_\ell$. 
Since $\alpha$ and $\beta$ actually actually lie in $PSL (2, \CC) < PL (2, \HH )$, 
the relevant representations of $\ZZ_2\ast \ZZ_\ell$ and 
$\pi_1 (X) \triangleleft  \ZZ_2\ast \ZZ_\ell$ are actually
$PSL (2, \CC)$-valued, and the limit set $\Lambda$ is thus a subset 
of the plane $\CC  \subset \HH$. 

\subsection{The Limit Set $\Lambda$}

The limit  set $\Lambda$ comes, by construction, with a hierarchy 
of coverings by disks in $\CC$.  At the crudest level, it is contained in a union of
$\ell$ disks of radius $\varepsilon$, where we are allowed to choose
any positive $\varepsilon < \sin (\pi/\ell)$. 
We now express this choice as 
$$\varepsilon = \frac{\sin (\pi/\ell)}{C+1}$$ 
for some  real number $C> 0$. Each disk in the successive layers of the heirarchy 
contains $\ell -1$ times as many disks as in the previous layers, and 
each disk in any  layer  has radius smaller than that of 
some disk in the  preceding layer by a factor of better than $C^{-2}$. In particular, 
$\Lambda$ is contained in a union of $\ell (\ell -1)^N$ disks of radius less
than  $C^{-2N}$, and so has $d$-dimensional Hausdorff measure 
$< \ell [(\ell -1)/C^{2d}]^N$.   Taking $N\to \infty$, we thus see  that 
 the Hausdorff measure of $\Lambda$ is  zero
in any dimension $> [\log (\ell -1)]/2\log C$. For fixed $\ell$, it follows that 
that the Hausdorff dimension $\dim \Lambda$ can be taken arbitrarily close
to $0$ by taking $\varepsilon$ to be sufficiently small.

This now implies that some of the constructed
conformal structures on $V$ are represented by conformally flat
orbifold metrics of positive scalar curvature. 
Indeed, let us recall a 
 remarkable result of Schoen and Yau \cite{schyaudim}, together with a slight refinement due to 
   Nayatani \cite{shinpat}: 

\begin{lem}[Schoen-Yau, Nayatani] 
\label{schauen}
Let (M,[g]) be a compact, locally conformally flat 
$n$-manifold, $n\geq 3$,  which can be uniformized as $$M=\Omega/G,$$ where 
$G\subset SO^\uparrow (n+1, 1)$ is a Kleinian group
and where $\Omega\subset S^n$ is the region of discontinuity of $G$. Let 
$g\in [ g]$ be a  metric on $M$ in the fixed conformal class for which 
the scalar curvature $s$ does not change sign. Assume that  the limit set $\Lambda$ of $G$
is infinite, and let $\dim (\Lambda ) > 0$ denotes its 
Hausdorff dimension.
Then
\begin{eqnarray*}
s > 0 & \Longleftrightarrow & \dim (\Lambda ) < \frac{n}{2}-1 \\
s = 0 & \Longleftrightarrow & \dim (\Lambda ) = \frac{n}{2}-1 \\
s < 0 & \Longleftrightarrow & \dim (\Lambda ) > \frac{n}{2}-1  . 
\end{eqnarray*} 
\end{lem}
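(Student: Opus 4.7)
The plan is to translate the sign condition on $s$ into a spectral statement about the conformal Laplacian, and then exhibit, via Patterson--Sullivan theory, an explicit representative of $[g]$ whose scalar curvature has its sign directly controlled by $\dim(\Lambda)$.

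First, I would observe that the sign of the lowest eigenvalue $\lambda_1(L_g)$ of the conformal Laplacian $L_g=-\frac{4(n-1)}{n-2}\Delta_g+s_g$ is a conformal invariant, and that whenever some representative $g\in[g]$ has $s\geq 0$ (resp.\ $\leq 0$, $\equiv 0$) one has $\lambda_1(L_g)\geq 0$ (resp.\ $\leq 0$, $=0$), as one sees by testing against the constant function. Conversely, the Yamabe-minimizing metric in $[g]$ has constant scalar curvature with the sign of $\lambda_1(L_g)$. Hence, under the hypothesis that $s$ does not change sign, the sign of $s$ agrees with $\mathrm{sign}(\lambda_1(L_g))$, and the entire question reduces to exhibiting a single representative of $[g]$ whose scalar curvature has the expected sign.

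Next, I would use Patterson--Sullivan theory applied to the Kleinian group $G\subset SO^\uparrow(n+1,1)$. Since $\Lambda$ is infinite, the Poincar\'e series construction of Patterson--Sullivan produces a positive $G$-conformal density $\mu$ on $\Lambda$ of dimension $\delta=\dim(\Lambda)$, meaning $\gamma^{*}\mu=|\gamma'|^{\delta}\mu$ for every $\gamma\in G$, where $|\gamma'|$ denotes the linear dilatation of the M\"obius transformation with respect to the round metric $g_0$ on $S^n$. I would then define Nayatani's function on $\Omega$ by
$$
\varphi_\mu(x)\;=\;\int_{\Lambda}\bigl(2|x-y|^{-2}\bigr)^{\delta}\,d\mu(y),
$$
where $|x-y|$ denotes chordal distance on $S^n$. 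The conformal transformation law of $\mu$ exactly cancels the dilatation of the chordal kernel under $\gamma\in G$, so $\varphi_\mu^{4/(n-2)}\,g_0$ is $G$-invariant and descends to a smooth metric $g_\mu\in[g]$ on $M=\Omega/G$.

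A direct calculation of $L_{g_0}\varphi_\mu$, using that $g_0$ has constant positive scalar curvature and that on $S^n$ the kernel $|x-y|^{-2\delta}$ is a Poisson-type eigenfunction, gives
$$
L_{g_0}\varphi_\mu\;=\;c_n\,\delta\bigl((n-2)-\delta\bigr)\int_{\Lambda}\bigl(2|x-y|^{-2}\bigr)^{\delta+1}\,d\mu(y),
$$
whose sign is precisely that of $\tfrac{n-2}{2}-\delta=\tfrac{n}{2}-1-\dim(\Lambda)$. Since the scalar curvature of $g_\mu$ is a positive multiple of $\varphi_\mu^{-(n+2)/(n-2)}L_{g_0}\varphi_\mu$, the full trichotomy follows from the reduction in the first paragraph. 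The main obstacle is the curvature computation: verifying the displayed formula requires differentiating a singular integral and commuting $L_{g_0}$ with integration against $\mu$, which in turn forces one to exploit the conformal covariance of $\mu$ to reduce everything to a pointwise identity on $\Omega$. A secondary technical point is ensuring that the Patterson--Sullivan exponent coincides with $\dim(\Lambda)$ rather than some a priori smaller number; for the convex-cocompact Kleinian groups relevant here, this is the content of Sullivan's dimension theorem.
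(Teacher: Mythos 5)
The paper itself offers no proof of Lemma \ref{schauen}: it is quoted from Schoen--Yau \cite{schyaudim} and Nayatani \cite{shinpat}. Your architecture is nonetheless the standard (Nayatani) one: reduce the trichotomy to the sign of the first eigenvalue of the conformal Laplacian, which is a conformal invariant and agrees with the sign of $s$ for any representative of constant-sign scalar curvature, and then produce one canonical representative, built from a Patterson--Sullivan density, whose scalar curvature has the sign of $\tfrac{n-2}{2}-\delta$. That reduction is fine.

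The central construction, however, fails on equivariance as written. With $\gamma^{*}\mu=|\gamma'|^{\delta}\mu$ and the chordal kernel satisfying $|\gamma x-\gamma y|^{-2\delta}=|\gamma'(x)|^{-\delta}|\gamma'(y)|^{-\delta}|x-y|^{-2\delta}$, one gets $\varphi_\mu(\gamma x)=|\gamma'(x)|^{-\delta}\varphi_\mu(x)$, so $\varphi_\mu^{4/(n-2)}g_0$ is $G$-invariant only in the borderline case $\delta=\tfrac{n-2}{2}$. Nayatani's metric is $g_\mu=\varphi_\mu^{2/\delta}g_0$, i.e.\ the conformal factor is $u^{4/(n-2)}$ with $u=\varphi_\mu^{(n-2)/(2\delta)}$; hence $s_{g_\mu}$ is proportional to $u^{-(n+2)/(n-2)}L_{g_0}u$, not to $L_{g_0}\varphi_\mu$, and expanding $L_{g_0}\bigl(\varphi_\mu^{(n-2)/(2\delta)}\bigr)$ produces an additional $|\nabla\varphi_\mu|^{2}$ term because the exponent is nonlinear in the measure. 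Controlling the sign of that extra term is precisely where the real work lies (Nayatani does it with a Cauchy--Schwarz inequality applied to the integrals against $\mu$, and in fact controls the full Ricci tensor, not just $s$). Two further points: your displayed identity should carry the factor $\delta\bigl((n-2)-2\delta\bigr)$ rather than $\delta\bigl((n-2)-\delta\bigr)$ --- as written its sign changes at $\delta=n-2$, not at $\tfrac{n}{2}-1$ as you then assert --- and the identification of the Patterson--Sullivan exponent with $\dim(\Lambda)$, which you defer to Sullivan, genuinely requires geometric finiteness (or the Bishop--Jones theorem on the conical limit set); this is harmless for the convex cocompact groups arising in this paper, but it is a real caveat for the lemma as stated for arbitrary Kleinian groups with infinite limit set.
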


Now any conformal class on any compact manifold contains
metrics for which the scalar curvature does not change sign;
moreover, as observed by Trudinger \cite{trud}, this sign coincides, for any background metric 
$g$,  with 
that of the lowest eigenvalue $\lambda$ of the Yamabe Laplacian,
given in dimension $4$ by 
$$\Delta_g + \frac{s_g}{6}~;$$
indeed, if $u$ is an eigenfunction with eigenvalue $\lambda$, then 
$u$ is nowhere zero by the minimum principle, and 
$\hat{g}=u^2g$ then has scalar curvature
$$\hat{s}= u^{-3}6(\Delta_g + \frac{s_g}{6})u= 6\lambda u^{-2}$$
which everywhere has the same sign as $\lambda$. Let us apply
this to $$X= \underbrace{(S^3\times S^1) \# \cdots \# (S^3\times S^1)}_{\ell -1}~.$$
By averaging, we may begin by 
choosing our background metric $g$ on $X$ to be invariant 
under the action of the finite group $D_\ell$. Because the minimum principle
implies that the lowest eigenvalue $\lambda$ of the Yamabe Laplacian has
multiplicity one, the corresponding lowest eigenfunction $u$ is unique
up to an overall multiplicative constant, and so must be invariant under 
the finite group of isometries $D_\ell$. Thus $\hat{g}=u^{2}g$ may either
be viewed as a 
$D_\ell$-invariant conformally flat metric on $X$,  or else as a
conformally flat orbifold metric on $V=X/D_\ell$. 
By Lemma \ref{schauen},  the sign of the scalar curvature of $\hat{g}$ will therefore 
be positive iff
the Hausdorff dimension of  the limit set $\Lambda$ of our 
$\ZZ_2 \ast \ZZ_\ell$ action is less than $1$. Since we have just seen that this can be 
accomplished by taking the parameter $\varepsilon$ to be sufficiently small, it
follows that some of our conformal classes are indeed represented by 
conformally flat orbifold metrics on $V$ which have positive scalar curvature,
as would also be predicted by more elementary considerations.

Our real goal is to show that the above  metrics on $V$ 
can be  
continuously deformed through locally conformally flat metrics in order to yield  metrics
of negative scalar curvature. We will do this by 
deforming the Kleinian groups $\ZZ_2 \ast \ZZ_\ell \hookrightarrow PSL (2, \CC )$
considered above 
into   ones whose limit sets have  Hausdorff dimension greater than $1$. This problem  is intimately tied to the
theory of non-classical Schottky groups, and its solution will require the entire
 next section of the paper.

\section{Deformations with large limit sets} \label{moduli}
The main point of this section is to show that there are quasi-conformal deformations of the
Kleinian group $G$ of \S \ref{naive}
whose limit sets have Hausdorff dimension arbitrarily close to 2. The proof, which uses ideas going back to Bers \cite{Bers:BdrysI} and Maskit \cite{Maskit:BdrysII}, can be easily generalized to a larger class of Kleinian groups. Rather than work out a detailed formulation of this more general theorem, we content ourselves with assuming that $G $ is a combination theorem free product of an
elliptic cyclic group of order
$m$, and an elliptic cyclic group of order $\ell  \ge m$, where $\ell  \ge 3$. That is, $G \cong
\ZZ_m \ast \ZZ_\ell$ is the free product of an element $\alpha$ of order $m$, and an element $\beta$ of order $\ell$. For our application, we of course only need the case of $m=2$, and it
is the case of  $\ell=3$ which is of primary interest here.

\subsection{Holomorphic coordinates}\label{coordinate} 
We note that, since $\ell   \ge 3$, the fixed points of $\beta$ can be distinguished. That is, there is no M\"obius transformation conjugating $\beta$ into itself and interchanging its fixed points. We label one of these fixed points as positive. (For example, if we conjugate $\beta$ so that its positive fixed point is at 0, while its negative fixed point is at $\infty$, then this conjugate has the form $z\mapsto e^{2\pi i/\ell }z$.)

We normalize $G $ so that $\alpha$ has fixed points at 0 and $\infty$, and so that the positive fixed point of $\beta$ is at 1; if the order $m$ of $\alpha$ is greater than 1, then we put the positive fixed point of $\alpha$ at 0. Let $z_0$ be the negative fixed point of $\beta$. For every $z\ne 1$, we define the homomorphism $\phi_z:G \to PSL(2,{\mathbb C})$ given by $\phi_z(\alpha) = \alpha$ and $\phi_z(\beta)$ is the Mobius transformation of order $\ell $ with positive fixed point at 1 and negative
fixed point at $z$.

We observe that if $z = 0$, or $z = \infty$, then $\phi_z(G)$ contains parabolic elements and has a fixed point $z$. We also note that if $z$ is real and negative, then the axes of $\phi_z(\alpha)$ and $\phi_z(\beta)$ intersect at a point in the upper half space; this point is necessarily a fixed point of $\phi_z(G)$. It was observed by Chuckrow \cite{Chuckrow:Schottky} that no point on the algebraic convergence boundary of the deformation spaace of $G$ has a fixed point in ${\mathbb H}^3$ or on the
sphere at infinity.

We denote the complex plane, punctured at 1, and with the negative real axis deleted, by $\CC^\bullet$. The homomorphism $\phi_z$ defines an embedding of $\CC^\bullet$ into the space of homomorphisms of $G $ into $PSL(2,{\mathbb C})$.

 Let ${\mathcal T} = {\mathcal T}(G )$ denote the (quasi-conformal) deformation space of $G $; where the quasi-conformal mappings are as usual normalized so as to fix $(0,1,\infty)$. With this normalization, every quasi-conformal mapping representing an element of ${\mathcal T}$ conjugates $\alpha$ into itself and preserves the positive fixed point of $\beta$. Since no element of the closure of ${\mathcal T}$ can conjugate $G$ into a group with a fixed point, we can regard the closure of ${\mathcal T}$
as a subset of $\CC^\bullet$.

It was shown by Ahlfors and Bers \cite{A-B:Beltrami} that the image of a point
under a family of
quasi-conformal mappings is a holomorphic function of parameters for the family; it follows that if we regard ${\mathcal T}$ as being endowed with the complex structure defined by the projection from its universal covering space, the Teichm\"uller space of Riemann surfaces of genus 0 with four punctures, then this embedding of ${\mathcal T}$ into $\CC^\bullet$ is holomorphic.

The parameter $z$ appears to depend on our normalization. However, if we write the cross-ratio of four points as
\begin{equation}\label{Eqn:zfromt}
(a,b;c,d) = \frac{(a-c)(b-d)}{(a-d)(b-c)},
\end{equation}
then  $z = (z,1;0,\infty)$.

\subsection{Geometric coordinates}

Let $z\in\CC^\bullet$, and let $\alpha_z = \phi_z(\alpha)$, and $\beta_z = \phi_z(\beta)$; since $z$ does not lie on the closed negative real axis, there is a positive hyperbolic distance $d$ between the axes of $\alpha_z$ and $\beta_z$.

We renormalize $\alpha_z$ and $\beta_z$, by sending 0 to $+1$ and by requiring that the common perpendicular between the axes of $\alpha_z$ and $\beta_z$ have its endpoints at 0 and $\infty$, where $0$ is closer to the axis of $\alpha_z$. After this renormalization, $\alpha_z$ has fixed points at $\pm 1$, with $+1$ the positive fixed point if $m > 2$; $\beta_z$ has its positive fixed point at some point $t = |t|e^{i\theta}$, and its negative fixed point at $-t$. One sees at once that
$|t| = e^d$, and that $\theta$ is the angle
between the axes of $\alpha_z$ and $\beta_z$ obtained by parallel transport along the common perpendicular. (More precisely, $\theta$ is the angle from the ray pointing to $+1$ along the axis of $\alpha_z$ to the ray pointing to $t$ along the axis of $\beta_z$.) Then $t = e^{d + i\theta}$ is a geometric parameter for the subspace $\CC^\bullet\subset Hom(G , PSL(2,{\mathbb C}))$.

Using the invariance of the cross-ratio, we obtain

\begin{equation}\label{eq:1}
z= z(t) = (-t,+t;+1,-1) =  \frac{(t+1)^2}{(t-1)^2}.
\end{equation}

It was shown by Gehring, Marshall and Martin \cite{GMM:LowerBounds} that for $z\in {\mathcal T}$ there is a positive lower bound to the distance $d$. As stated above, this is also easy to see using the result of Chuckrow \cite{Chuckrow:Schottky}. It follows that $\sqrt{z}$,
which is positive for $z$ real and positive, is a holomorphic coordinate on $T$ that extends to the boundary, from which it follows that
\begin{equation}\label{Eq:tz}
t = t(z) = e^{d+i\theta} =  \frac{\sqrt{z}+1}{\sqrt{z}-1}
\end{equation}
is also a holomorphic coordinate on ${\mathcal T}$ that extends to the boundary.

We note that each point $t$ in the closure of ${\mathcal T}$ defines a homomorphism $\psi_t:G\to PSL(2,{\mathbb C})$, where $\psi_t = \phi_{z(t)}$. This homomorphism is in fact defined for all $t$ in the exterior of the unit disc.

\subsection{A boundary point   of the first kind}

There is a positive real number $L$ such that if $|t| > L$, then $t\in {\mathcal T}$.
For example, one can
 choose $L$ to be the  distance in the hyperbolic plane between the finite vertices of the triangle with angles $\pi/m$, $\pi/\ell$ and 0; equivalently, this is the distance between the fixed points
of the elliptic generators of orders $m$ and $\ell $ in the $(m,\ell ,\infty)$-triangle group.     Easy computation shows that
\begin{equation}
\cosh L = \frac{1 + \cos (\pi/m) \cos(\pi/\ell)}{\sin(\pi/m) \sin(\pi/\ell)}.
\end{equation}
For the  case
that especially interests us here, $m =  2$ and $\ell  = 3$, this triangle group is the elliptic modular group, and $L = \log\sqrt{3}$.

For each fixed angle, $\theta$, the ray $\arg t = \theta$ lies in ${\mathcal T}$ for $|t|$ sufficiently large, and does not lie even in the closure of ${\mathcal T}$ for $|t|$ sufficiently small. Hence there is some largest $t_\theta$ on this ray with $t_\theta$ on the boundary of ${\mathcal T}$.

For each fixed $g\in G $, the  entries in the matrix representing $\psi_t(g)$ are holomorphic functions of $t$. We note that $\psi_t(g)$ is parabolic only if the square of its trace is equal to 4. Hence there are at most countably many points in the closure of ${\mathcal T}$ (in fact, in the exterior of the unit disc), for which $\psi_t(g)$ can be parabolic. Since $G $ is countable, there are at most countably many points in the exterior of the unit disc for which some element of
$\psi_t(G )$ is parabolic.

We now choose the direction $\theta_0$ so that, for every point on the ray, $\arg t = \theta_0$, no element of the group $\psi_t(G)$ is parabolic. Let $t_0$ be the largest point on this ray, $\arg t = \theta_0$, where $t_0$ lies on the boundary of ${\mathcal T}$, and let $K = \psi_{t_0}(G)$. We will need below that there is a sequence of groups in ${\mathcal T}$ converging algebraically to $K$.

It follows from Chuckrow's theorem \cite{Chuckrow:Schottky} that $\psi_{t_0}$ is an isomorphism onto $K$; in particular, $K$ is an algebraic free product of cyclic groups of orders $m$ and $\ell $; we also know that no element of $K$ is parabolic.

\begin{prop} Every point of $\CP_1$  is a limit point of $K$.
In other words, $K$ is a Kleinian group of the first kind.\label{tiptop}
\end{prop}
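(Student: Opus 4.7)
The plan is to argue by contradiction: assume $\Omega(K)$ is nonempty, and show this forces $t_0$ to lie in the interior of $\mathcal T$, contradicting $t_0 \in \partial\mathcal T$.

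First I would record basic structural information about $K$. By Chuckrow's theorem $\psi_{t_0}$ is an isomorphism, so $K \cong \ZZ_m \ast \ZZ_\ell$, and by our choice of direction $\theta_0$ the group $K$ contains no parabolic elements. For any $G'\in\mathcal T$, the Klein--Maskit combination description of groups in $\mathcal T$ gives $\Omega(G')/G'$ as a $2$-sphere orbifold with four cone points (two of order $m$ arising from the fixed points of $\alpha$, and two of order $\ell$ arising from those of $\beta$); its orbifold hyperbolic area is therefore a fixed quantity $A_0$ depending only on $m$ and $\ell$. Under the assumption $\Omega(K)\neq\emptyset$, Ahlfors' finiteness theorem ensures that $\Omega(K)/K$ is a finite-area orbifold of some area $A$.

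The main step is an area dichotomy coming from the algebraic convergence of a sequence $\psi_{t_n}(G)\in\mathcal T$ to $K$. Either some element of $G$ that is loxodromic under each $\psi_{t_n}$ becomes parabolic under $\psi_{t_0}$ (an accidental parabolic), in which case one finds $A<A_0$ strictly; or no such degeneration occurs, in which case $A=A_0$ and $\Omega(K)/K$ has exactly the same orbifold type as $\Omega(G)/G$. This is Bers' boundary theorem, adapted from the classical Riemann-surface setting to our orbifold setting by keeping track of the cone-point contributions $(1-1/\nu_i)$ to the orbifold area. The first alternative is ruled out precisely by our choice of direction $\theta_0$, which avoids parabolicity of every element of $G$. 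In the remaining alternative, a quasi-conformal homeomorphism between the orbifolds $\Omega(G)/G$ and $\Omega(K)/K$ lifts, via the Measurable Riemann Mapping Theorem, to a global quasi-conformal self-map of $\CP_1$ conjugating $G$ to $K$. After renormalizing this conjugacy to fix $(0,1,\infty)$, it exhibits $K$ as a point of $\mathcal T$, and Marden's stability theorem (openness of the convex-cocompact locus in the representation variety of $G$) then places $t_0$ in the interior of $\mathcal T$, contradicting $t_0 \in \partial\mathcal T$.

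The main obstacle will be establishing the area dichotomy rigorously in the orbifold setting, since the classical Bers--Maskit framework is stated for torsion-free Kleinian groups and genuine Riemann surfaces. One must verify that any loss of orbifold area under the algebraic convergence $\psi_{t_n}(G)\to K$ is accounted for entirely by the birth of accidental parabolics; this can be done by adapting Bers' geometric arguments using the thick-thin decomposition, now applied to hyperbolic orbifolds rather than hyperbolic surfaces. The final step invoking Marden stability is essentially formal once the area dichotomy is in place.
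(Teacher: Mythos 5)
The central step of your argument --- the ``area dichotomy'' asserting that under the algebraic convergence $\psi_{t_n}(G)\to K$ either an accidental parabolic is born (with strict area loss) or else $\Omega(K)/K$ has the same orbifold type and area as $\Omega(G)/G$ --- is not a theorem, and this is a genuine gap. Algebraic limits of Kleinian groups can lose area in $\Omega/G$ through partial or total degeneration without any element becoming parabolic; indeed, Bers' totally degenerate boundary groups are precisely groups for which an entire component of the domain of discontinuity disappears in the limit while no new parabolics appear, and such groups are abundant on boundaries of deformation spaces. Your choice of the ray $\arg t=\theta_0$ rules out parabolics, but it does not rule out degeneration, so the second horn of your dichotomy does not follow; the thick--thin argument you sketch cannot close this, since degeneration is compatible with a uniform lower bound on translation lengths.

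The paper closes exactly this gap by a structural rather than metric argument: assuming $\Omega(K)\neq\varnothing$, it takes the stabilizer $H$ of a component, decomposes the function group $H$ into \emph{basic} groups via the combination theorems, and notes that each basic subgroup $J$ is (by the Kurosh subgroup theorem) a free product of cyclic groups containing no parabolics. Since every finitely generated Fuchsian group of the first kind that is a free product of cyclic groups necessarily contains a parabolic, and since quasi-Fuchsian and degenerate basic groups are parabolic-preservingly isomorphic to Fuchsian groups of the first kind, every basic subgroup must be finite. This forces $\Omega(K)$ to be connected with $H=K$, exhibits $K$ as a function group isomorphic to $G$, and Maskit's isomorphism theorem for function groups (rather than the measurable Riemann mapping theorem plus Marden stability) then supplies the quasi-conformal realization of $\psi_{t_0}$ that contradicts $t_0\in\partial\mathcal{T}$. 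If you wish to keep your area-based framing, you must separately exclude degenerate pieces, and the only available route here is essentially the algebraic argument just described.
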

\begin{proof}
Suppose that $K$ is of  the second kind; that is, suppose
that  it acts discontinuously at some point on the sphere at infinity.
Let $\Delta$ be a connected component of the set of discontinuity of $K$, and let $H$ be the stability subgroup of $\Delta$. Since $K$ is finitely generated, it follows from Ahlfors' finiteness theorem that $H$ is also finitely generated. Since $K$ is an algebraic free product of cyclic groups of orders $m$ and $\ell$, it follows that $H$ is an algebraic free product of a finite number of cyclic groups of orders $m$, $\ell$, and/or infinity.
The function group $H$ can be decomposed, using combination theorem amalgamated free products and HNN extensions, into {\it basic} groups; these are finitely
generated
subgroups of $H$, each containing no accidental parabolic element, and each having, as a Kleinian group in its own right, a simply connected invariant component of its set of discontinuity \cite {maskit}. It is also shown in \cite{maskit} that every basic group is either Fuchsian or quasifuchsian of the first kind, degenerate, Euclidean or finite; it is also well known that every such quasifucshian or degenerate group is isomorphic to a Fuchsian group of the first kind, where the isomorphism preserves
parabolic elements in both directions.

 Suppose $J$ is such a basic group. Since $K$ is a free product of cyclic groups, so is $J$; since $K$ contains no parabolic elements, neither does $J$. We conclude that $J$ cannot be Fuchsian, quasi-Fuchsian or degenerate, for every finitely-generated Fuchsian group of the first kind that is a free product of cyclic groups necessarily contains a parabolic element. We also conclude that $J$ is not Euclidean, for every Euclidean group contains parabolic elements. Hence $J$ is finite.

Since no basic subgroup of $H$ is either Fuchsian or quasi-Fuchsian, the set of discontinuity of $H$ is connected \cite{maskit}, implying both  that $H = K$ and that $\Delta$ is the full set of discontinuity of $K$. We now have that $K$ is a function group isomorphic to $G $.
By \cite{Maskit:iso}, it therefore follows that
 there is a quasi-conformal homeomorphism $w$ realizing the  isomorphism $\psi_{t_0}$.
 But  this contradicts our assumption that $t_0$ lies on the boundary of ${\mathcal T}$
and not in its interior. Thus $K$ cannot be of the second kind, and so must be
of the first kind, as claimed.
\end{proof}
\begin{thm}
For every $\epsilon > 0$, there is some  $t_\epsilon\in {\mathcal T}$  such that
the limit set of
$\psi_{t_\epsilon}(G )$
has  Hausdorff dimension  greater than $2-\epsilon$.
\end{thm}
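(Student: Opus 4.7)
The plan is to approach $t_0$ radially from within $\mathcal{T}$ along the ray $\arg t=\theta_0$ and transfer the dimension-$2$ property of the limit set of the algebraic limit $K$ to nearby convex cocompact groups via the Bishop--Jones identity. Choose any sequence $\{t_n\}\subset \mathcal{T}$ on this ray with $|t_n|\downarrow |t_0|$. Each $\psi_{t_n}(G)$ is convex cocompact, since it lies in the interior of the deformation space and contains no parabolic elements (by our choice of $\theta_0$), so Sullivan's theorem gives
$$\dim \Lambda(\psi_{t_n}(G))=\delta(\psi_{t_n}(G)),$$
where $\delta$ denotes the critical exponent of the Poincar\'e series.

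Next I would establish that $K=\psi_{t_0}(G)$ is finitely generated and geometrically infinite, and deduce $\delta(K)=2$. Finite generation is inherited from $G$. By Proposition \ref{tiptop}, $K$ is of the first kind, so the convex core of $\mathbb{H}^3/K$ equals the full quotient; if $K$ were convex cocompact this quotient would be a closed hyperbolic $3$-orbifold, which is impossible because $K\cong \ZZ_m *\ZZ_\ell$ is virtually free. Combined with the absence of parabolics (enforced by the choice of $\theta_0$), this forces $K$ to be geometrically infinite, and the Bishop--Jones theorem then yields $\delta(K)=2$.

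Finally, the algebraic convergence $\psi_{t_n}(G)\to K$ together with lower semicontinuity of the critical exponent gives $\liminf_n \delta(\psi_{t_n}(G))\geq 2$, whence $\dim \Lambda(\psi_{t_n}(G))>2-\epsilon$ for all sufficiently large $n$, and any such $t_n$ serves as $t_\epsilon$. The main technical obstacle is justifying lower semicontinuity in the degeneration to a geometrically infinite limit; this can be extracted either from general continuity theorems for analytic families of Kleinian groups, or handled directly via a truncated Poincar\'e series argument exploiting continuity of the translation lengths $d(o,\psi_t(g)\cdot o)$ in the parameter $t$: for each $s<2$ one chooses a finite $S\subset G$ with $\sum_{g\in S} e^{-s\,d(o,\psi_{t_0}(g)\cdot o)}$ arbitrarily large, and by continuity the same partial sum computed with $\psi_{t_n}$ remains large for $n$ near $t_0$, forcing $\delta(\psi_{t_n}(G))\geq s$ along the sequence.
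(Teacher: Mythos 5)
Your overall strategy coincides with the paper's: both arguments rest on the facts that $K=\psi_{t_0}(G)$ is an algebraic limit of groups $\psi_{t_n}(G)$ with $t_n\in{\mathcal T}$, that $\dim\Lambda(K)=2$, and that the Hausdorff dimension of the limit set is lower semicontinuous under algebraic convergence. The paper gets $\dim\Lambda(K)=2$ for free from Proposition \ref{tiptop} (the limit set is all of $S^2$) and then quotes the Bishop--Jones semicontinuity statement $\liminf\dim\Lambda(K_n)\geq\dim\Lambda(K)$ directly in terms of Hausdorff dimension. Your detour through Sullivan's theorem, geometric infiniteness of $K$, and $\delta(K)=2$ is coherent, but it is needed only because you insist on phrasing semicontinuity in terms of the critical exponent, and it adds dependencies: deducing $\delta=2$ (rather than merely $\dim\Lambda=2$) from geometric infiniteness requires tameness or an equivalent input beyond Bishop--Jones.

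The genuine problem is your proposed elementary proof of the semicontinuity itself. The truncated Poincar\'e series argument has a quantifier error: showing that for every $M$ there exist a finite set $S\subset G$ and an $N$ with $\sum_{g\in S}e^{-s\,d(o,\psi_{t_n}(g)\cdot o)}>M$ for all $n\geq N$ proves only that the values of the full Poincar\'e series at exponent $s$ tend to infinity along the sequence; it does not show that the series diverges for any fixed $n$. A convex cocompact group of the second kind with critical exponent strictly below $s$ has a convergent Poincar\'e series at $s$ whose value can nonetheless be arbitrarily large, so no finite partial sum can certify $\delta(\psi_{t_n}(G))\geq s$ --- and indeed each $\psi_{t_n}(G)$ has $\delta<2$, so the issue is not cosmetic. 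This is precisely why the semicontinuity statement is a theorem of Bishop and Jones rather than a formal consequence of continuity of the orbit map: their proof constructs large subsets of $\Lambda(\psi_{t_n}(G))$ modeled on the conical limit set of the limit group and bounds Hausdorff dimension directly. If you replace the truncated-series argument by a citation of that result (your first alternative), the proof closes and is then essentially the paper's.
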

\begin{proof}
Proposition \ref{tiptop} tells us that  the limit set of $K$ has Hausdorff dimension 2.
However, a result of Bishop and Jones \cite{bishopdim} asserts that if
$K_n\to K$ algebraically, then $\liminf \dim  \Lambda (K_n) \geq  \dim  \Lambda (K)$,
where $\dim  \Lambda$ denotes the Hausdorff dimension of the limit set of the
 relevant group.
Our result
therefore follows from the fact that, by construction,   $K$ is the algebraic limit of groups
of the form $\psi_t(G )$.
\end{proof}

\subsection{Scalar-Flat Orbifold Metrics} \label{warmup} 

We now specialize these conclusions to our original  case of 
$m=2$, and apply them to our  geometric setting. Doing so immediately
gives us an orbifold analogue of Theorem \ref{charlie}: 
 
\begin{prop} For each $\ell \geq 3$, 
there is a smooth  family $h_t$ of 
metrics on $X=(\ell -1)(S^1 \times S^3)$, $t\in [-1,1]$, such that 
\begin{itemize}
\item for each $t$, the metric $h_t$ is locally conformally flat and $D_\ell$-invariant; 
\item the metric $h_1$ is conformally related to a metric with   $s > 0$;  and 
\item the metric $h_{-1}$ is conformally related to a metric with  $s < 0$. 
\end{itemize}
 \end{prop}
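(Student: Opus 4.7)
The strategy is to combine the Schoen--Yau--Nayatani dichotomy (Lemma~\ref{schauen}) with the deformation-theoretic results assembled in this section. Specializing to $m=2$, the group $G = \ZZ_2\ast \ZZ_\ell$ admits a holomorphic family of Kleinian representations $\psi_t : G \to PSL(2,\CC) \subset PL(2,\HH)$ parameterized by $t$ in the deformation space $\mathcal T \subset \CC^\bullet$. Each such representation equips $V = X/D_\ell$ with a locally conformally flat orbifold conformal structure inherited from the round $S^4$; lifting to the $D_\ell$-cover yields a $D_\ell$-invariant locally conformally flat conformal structure on $X$. The plan is to pick a smooth one-parameter sub-family of these uniformizations running from a point where the limit set is very thin to a point where it is very thick, and then to choose $D_\ell$-invariant representative metrics along this path.

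First, I would select the two endpoints in $\mathcal T$. The discussion of \S\ref{naive} (taking the radius $\varepsilon$ of the defining disks small) produces a point $t_+\in \mathcal T$ whose limit set has Hausdorff dimension arbitrarily close to $0$, so in particular $\dim \Lambda_{t_+} < 1$. The Theorem just proved provides a point $t_-\in \mathcal T$ with $\dim \Lambda_{t_-} > 2-\epsilon > 1$. Since $\mathcal T$ is a connected open subset of $\CC^\bullet$, I pick a smooth path $\gamma:[-1,1] \to \mathcal T$ with $\gamma(\pm 1)=t_\pm$.

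Next, I would realize the corresponding family of conformal structures on a single fixed smooth manifold $X$. By Ahlfors--Bers (already invoked in \S\ref{coordinate}), the quasi-conformal conjugacies between $G$ and $\psi_{\gamma(t)}(G)$ depend holomorphically on the parameter and are canonically $D_\ell$-equivariant, giving a smoothly varying identification of the quotients $\Omega_{\gamma(t)}/\psi_{\gamma(t)}(\pi_1(X))$ with the fixed smooth $X$. Pulling back the conformally flat structure from $\Omega_{\gamma(t)}\subset S^4$ via these identifications produces a smooth family of locally conformally flat conformal structures on $X$; choosing any smoothly varying background representative and then averaging over $D_\ell$, which acts by conformal automorphisms of each member, yields the required smooth family $h_t$ of $D_\ell$-invariant locally conformally flat metrics.

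Finally, I would read off the scalar-curvature signs at the endpoints via Lemma~\ref{schauen}. With $n=4$ the threshold dimension is $n/2-1 = 1$, so the choice of endpoints gives $\dim \Lambda_{t_+} < 1$ and $\dim \Lambda_{t_-} > 1$. The lemma then guarantees that $[h_1]$ contains a metric with $s>0$ while $[h_{-1}]$ contains a metric with $s<0$, as desired. The delicate step is the middle one: producing an honestly smooth family of $D_\ell$-invariant metrics on the fixed manifold $X$ as $t$ traverses $\gamma$. This rests on the holomorphic dependence of the Kleinian data on $t$ from Ahlfors--Bers, together with the fact that the $D_\ell$-equivariant quasi-conformal homeomorphisms can be smoothed in a parameter-dependent way once one passes to representative Riemannian metrics in each conformal class.
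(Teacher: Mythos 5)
Your proposal is correct and follows essentially the same route as the paper: choose two points of the connected deformation space ${\mathcal T}$ whose limit sets have Hausdorff dimension on either side of $1$, join them by a smooth arc, realize the resulting family of flat conformal structures on a fixed copy of $X$ with a parameter-independent $D_\ell$-action, average representative metrics over $D_\ell$, and invoke Lemma~\ref{schauen} at the endpoints. The paper is terser about the smooth trivialization of the family (simply asserting the total space is $X\times I$ with fixed $D_\ell$-action), whereas you justify it via Ahlfors--Bers; the substance is the same.
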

 
 \begin{proof}
 Choose two points in $T$,  one corresponding to a group of 
 Hausdorff dimension $> 1$,  the other corresponding to a
 group with Hausdorff dimension $< 1$. Since the  manifold 
$T$ is connected, these two points can be joined by a smooth arc.
This arc  then corresponds to a  family of manifolds diffeomorphic
to $X= (\ell -1)(S^1 \times S^3)$, together with a smooth  family of flat conformal structures on them
and a smooth  family of actions of the dihedral group $D_\ell$
compatible with these conformal structures. 
The 
total space of this family is then diffeomorphic to 
$X \times I$ such a manner that the $D_\ell$ action is independent of the 
parameter. We now represent our conformal structures
by a smooth family of metrics. 
By adding all the pull-backs of these metrics with respect 
to the fixed $D_\ell$-action, we then obtain  representatives
$h_t$ which are $D_\ell$-invariant.  By
Lemma \ref{schauen}, 
the metric at one end-point, say $h_{-1}$,  is then conformal to a metric with 
negative scalar curvature, while the metric at the other end-point, say
$h_1$, is  conformal to a metric with 
positive scalar curvature.
\end{proof}

Now, for each $t$, let $\lambda_t$ be the lowest eigenvalue of the 
Yamabe Laplacian
$$\Delta_{h_t} + \frac{s_{h_t}}{6}$$
and recall that, by the minimum principle, any corresponding eigenfunction $u_t$ must be everywhere non-zero. 
 Hence  
$\lambda_t$ has multiplicity $1$, and so varies
continuously with $t$. However, we know that $\lambda_{-1} < 0$, and 
$\lambda_{+1} > 0$, so the intermediate value theorem predicts the existence of
some $t_0\in [-1,1]$ such that $\lambda_{t_0}=0$. Letting $u_{t_0}$ be the
corresponding positive eigenfunction, the conformally flat  metric 
$h= u_{t_0}^2h_{t_0}$ is then $D_\ell$-invariant, and 
has scalar curvature $s\equiv 0$. Thus:

\begin{prop}
For each $\ell$, the orbifold $V= [ (\ell -1)(S^1 \times S^3)]/D_\ell$ 
admits  locally conformally flat, scalar-flat orbifold metrics. Such metrics are
optimal, in the the sense of the natural  orbifold extension of Definition \ref{defopt}. 
\end{prop}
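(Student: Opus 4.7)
The scalar-flat metric is already essentially constructed in the paragraph immediately preceding the statement; my plan is to tidy up that argument and then add the optimality verification. Starting from the family $\{h_t\}_{t\in[-1,1]}$ of $D_\ell$-invariant, locally conformally flat metrics on $X$ supplied by the previous proposition, let $\lambda_t$ denote the lowest eigenvalue of the Yamabe Laplacian $\Delta_{h_t}+s_{h_t}/6$. The minimum principle forces any corresponding eigenfunction to be of one sign, and orthogonality then forces $\lambda_t$ to be simple; consequently $\lambda_t$ depends continuously on $t$. Since the sign of $\lambda_t$ matches that of the scalar curvature of a one-signed representative of $[h_t]$, the endpoint hypotheses furnish $\lambda_{-1}<0<\lambda_1$, and the intermediate value theorem supplies some $t_0\in(-1,1)$ with $\lambda_{t_0}=0$. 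For the corresponding positive eigenfunction $u_{t_0}$, the conformal transformation law of \S\ref{rud} guarantees that $h:=u_{t_0}^2 h_{t_0}$ is locally conformally flat with $s\equiv 0$.

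To descend $h$ to $V=X/D_\ell$, I would observe that the $D_\ell$-average of $u_{t_0}$ is again a lowest eigenfunction, hence, by simplicity of $\lambda_{t_0}$, a positive scalar multiple of $u_{t_0}$. Thus $u_{t_0}$ is itself $D_\ell$-invariant after rescaling, and $h$ descends to a locally conformally flat, scalar-flat orbifold metric on $V$.

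For the optimality assertion, I would invoke the orbifold versions of the Gauss-Bonnet identity (\ref{gb}) and the Hirzebruch signature formula (\ref{sig}): these hold verbatim for smooth orbifold metrics on $V$, with $\chi(V)$ and $\tau(V)$ interpreted as the orbifold Euler characteristic and signature, quantities which depend only on $V$ and not on the metric. Combining them exactly as in \S\ref{rud} yields
$$\mathcal{K}(\tilde g) \;=\; -8\pi^2\bigl(\chi+3\tau\bigr)(V) \;+\; 2\int_V \Bigl(\tfrac{s^2}{24}+2|W_+|^2\Bigr)\,d\mu_{\tilde g}$$
for every smooth orbifold metric $\tilde g$ on $V$. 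The integrand is pointwise non-negative and vanishes identically exactly when $\tilde g$ is SFASD; the metric $h$ constructed above is SFASD (since local conformal flatness forces $W_+\equiv 0$), so it saturates the topological lower bound and is therefore a global minimizer of $\mathcal K$, i.e.\ optimal.

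The main obstacle is conceptual rather than technical: one must be satisfied that the orbifold Gauss-Bonnet and signature identities are in fact available in this setting, and in particular that the topological correction terms attached to the two singular points of $V$ depend only on the orbifold structure and not on the metric. Once this is granted, the rest of the argument is a routine instance of the extremal characterization already exploited by Lafontaine in the smooth case.
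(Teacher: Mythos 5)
Your proposal is correct and follows essentially the same route as the paper: the intermediate-value argument on the lowest Yamabe eigenvalue applied to the $D_\ell$-invariant family $h_t$, descent to $V$ via simplicity (hence $D_\ell$-invariance) of the lowest eigenfunction, and optimality via the Lafontaine-type identity $\mathcal{K}=-8\pi^2(\chi+3\tau)+2\int(\tfrac{s^2}{24}+2|W_+|^2)\,d\mu$ in its orbifold form. Your explicit flagging of the orbifold Gauss--Bonnet and signature formulae is a reasonable elaboration of what the paper leaves implicit in the phrase ``natural orbifold extension.''
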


\subsection{Deformation Theory}

In order to carry out our gluing construction, we will want
to know that the deformation theory of anti-self-dual conformal structures
is formally unobstructed for each of the constructed  flat conformal structures
on $V$.  We will deduce this from an analogous statement about flat 
conformal structures on the connected sum $X=(\ell -1)(S^1 \times S^3)$.

To make this  precise, recall that the 
deformation theory of anti-self-dual conformal structures
on any $4$-orbifold $Y$ is governed by the elliptic complex \cite{DF} 
$$
0\to \Gamma (TY) \stackrel{{\mathcal L}}{\longrightarrow} \Gamma (\odot^2_0 \Lambda^1 )
 \stackrel{DW_+}{\longrightarrow} \Gamma (\odot^2_0 \Lambda^+ )\to 0$$
where $\odot^2_0$ denotes the trace-free symmetric product. 
Here
$\mathcal L$ computes  the Lie derivative of the  conformal metric along  vector fields, 
while $DW_+$ is the linearization  of the self-dual Weyl tensor at $g$. 
The deformation theory is unobstructed at a given conformal metric
iff $H^2$ of this complex vanishes. (We remark in passing that this amounts \cite{DF,eastsing2} to 
saying that the Kodaira-Spencer deformation theory of the corresponding twistor space
is unobstructed.)

\begin{prop} 
\label{vgood}
For    any flat conformal structure $[g]$ on the
orbifold $V=(S^4/\ZZ_2) \# (S^4/\ZZ_\ell)$,  the differential operator
$$DW_+: C^\infty (\odot^2_0 \Lambda^1 ) \to C^\infty (\odot^2_0 \Lambda^+ )$$
is surjective. In other words, the 
 deformation theory of ASD conformal structures on 
$V$ is unobstructed at $[g]$. 
\end{prop}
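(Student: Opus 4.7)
The plan is to reduce via Hodge theory for the elliptic complex to showing that the formal $L^2$ adjoint $DW_+^*$ has trivial kernel on $C^\infty(\odot^2_0\Lambda^+)$, and then to establish this vanishing by lifting any element of that kernel to the universal cover, extending it across the Kleinian limit set to a smooth section on the round $S^4$, and invoking the rigidity of the round conformal class.

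More precisely, the flat conformal structure $[g]$ is realized as $V=\Omega/\Gamma$, where $\Gamma=\ZZ_2\ast\ZZ_\ell\hookrightarrow PSL(2,\CC)$ acts on $S^4$ by M\"obius transformations with region of discontinuity $\Omega\subset S^4$ and limit set $\Lambda$. Because $W\equiv 0$, the operator $DW_+$ is conformally covariant at $g$, so a section $\psi\in\ker DW_+^*$ on $V$ pulls back to a $\Gamma$-equivariant smooth $\tilde\psi$ on $\Omega$ satisfying the same equation with respect to the round metric on $S^4$. The limit set $\Lambda$ is closed, nowhere dense, and in the Schottky regime of \S\ref{naive} it has Hausdorff dimension strictly less than two, hence vanishing $2$-capacity; a standard removable-singularity theorem for second-order elliptic systems then implies that any locally $L^2$ solution extends smoothly across such a set. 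The required local $L^2$ bound on $\tilde\psi$ near $\Lambda$ comes from $\Gamma$-equivariance together with conformal covariance: balls shrinking to a point of $\Lambda$ are covered by images of a fixed fundamental domain under generators of $\Gamma$ whose contraction ratios are controlled by the nested-disk hierarchy of \S\ref{naive}, and a geometric-series estimate in those ratios gives a uniform bound. Once extended, $\tilde\psi$ is a smooth global element of $\ker DW_+^*$ for the round metric on $S^4$; this kernel vanishes by rigidity of the round conformal class, since $H^1$ of its deformation complex is zero, $H^0 = \mathfrak{so}(5,1)$, and the Atiyah--Singer index of the complex then forces $H^2 = 0$. Hence $\tilde\psi\equiv 0$, and so $\psi\equiv 0$ on $V$.

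The main obstacle is the removable-singularity step. For members of the family with $\dim_H \Lambda$ close to two, the $2$-capacity hypothesis becomes borderline, and the uniform removability estimate has to be extracted by exploiting the explicit Schottky contraction estimates for $\Gamma$ rather than invoking any off-the-shelf theorem. A natural alternative that sidesteps this analytic delicacy would be a Mayer--Vietoris argument along the connected-sum neck of $V=(S^4/\ZZ_2)\#(S^4/\ZZ_\ell)$, combining the well-known vanishing of $H^2$ on each spherical orbifold piece with a decay estimate on the long cylindrical overlap, in the spirit of the Donaldson--Friedmann construction referenced in the introduction.
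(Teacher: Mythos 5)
Your reduction to $\ker(DW_+)^*=0$ and your appeal to conformal invariance are fine, but the central analytic step --- the claimed local $L^2$ bound for the pull-back $\tilde\psi$ near the limit set $\Lambda$ --- is not merely delicate; it is false. The conformal weight that makes the equation $(DW_+)^*\varphi=0$ invariant is precisely the one for which the $L^2$ inner product on sections of $\odot^2_0\Lambda^+$ is itself conformally invariant. Consequently each $\Gamma$-translate of a fundamental domain for $\Omega\to V$ contributes \emph{exactly the same} $L^2$ mass as the original: there are no contraction ratios to sum, your ``geometric series'' is a constant series, and a nonzero $\psi$ on the compact orbifold $V$ pulls back to a section whose $L^2$ norm is infinite in every neighborhood of every point of $\Lambda$. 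Concretely, the round-metric representative of $\tilde\psi$ is $u^2$ times the bounded $\Gamma$-invariant one, where $u$ is the conformal factor relating the pulled-back quotient metric to the round metric, and $u\to\infty$ at $\Lambda$; this is nowhere near locally square-integrable across a limit set of positive dimension. So the hypothesis of the removable-singularity theorem is never satisfied by the objects you need to kill. Even granting an $L^2$ bound, the removability claim is doubtful: $(DW_+)^*$ is second order but overdetermined, and its regularity theory is governed by the fourth-order elliptic operator $DW_+(DW_+)^*$; for fourth-order operators on a $4$-manifold even isolated points fail to be removable for locally $L^2$ solutions (the fundamental solution of $\Delta^2$ on $\RR^4$ is $\log\varrho$, which is locally $L^2$), so a compact set of Hausdorff dimension close to $2$ cannot be dispatched by a ``standard'' capacity argument.

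The paper's proof avoids the limit set entirely and is much closer to the alternative you sketch in your last sentence. As established in \S \ref{uniform}, $V$ is a \emph{global quotient} $X/D_\ell$ of the smooth compact conformally flat manifold $X=(\ell-1)(S^1\times S^3)$ by a finite group. Any $\varphi\in\ker(DW_+)^*$ on $V$ pulls back to an element of $\ker(DW_+)^*$ on the finite cover $X$, and the Mayer--Vietoris argument of Eastwood and Singer (quoted as Theorem 8.2 of \cite{loptimal}) shows that this kernel vanishes for every flat conformal structure on a connected sum of copies of $S^1\times S^3$. The moral is that the correct cover to pass to is the \emph{finite} one, where compactness makes the $L^2$ theory apply verbatim, rather than the infinite universal cover, where conformal invariance of the norm destroys integrability at the limit set; your proposed neck-stretching Mayer--Vietoris on the orbifold pieces could presumably be made to work, but it would require redoing the Eastwood--Singer computation in the orbifold category rather than simply citing it.
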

\begin{proof}
By ellipticity, it is enough to show that $\ker (DW_+)^*=0$. But   we saw in \S \ref{uniform}
that 
$V$ can be expressed as a global quotient $X/D_\ell$, where 
$$X= \underbrace{(S^3\times S^1) \# \cdots \# (S^3\times S^1)}_{\ell -1}~.$$
Thus any flat conformal structure on $V$ pulls back to a flat conformal structure on $X$, and any 
$\varphi \in \ker (DW_+)^*$ pulls back to an element 
of the cokernel of the corresponding operator $DW_+$  on $X$.
However, a Mayer-Vietoris argument due to  Eastwood and Singer shows  
\cite[Theorem 8.2]{loptimal} that the deformation of   ASD conformal structures 
 is unobstructed at every flat conformal structure on $X$. 
Since the pull-back of $\varphi$ to $X$ therefore vanishes, so
does $\varphi$ itself, and $DW_+$ is therefore surjective, as claimed. 
\end{proof}

\pagebreak

\section{ALE Metrics} 

In addition to the conformally flat orbifolds considered in the previous section,
our construction will crucially involve the use 
of  two  classes of examples of complete, non-compact anti-self-dual $4$-manifolds
which are asymptotically locally Euclidean (ALE).
The examples we will use are in fact all   scalar-flat and K\"ahler. 
After an explicit description of the metrics we will actually need, 
we then prove a general result (Theorem \ref{sesame}, \S \ref{flash}) concerning the deformation theory
of  ALE scalar-flat  K\"ahler manifolds.

\subsection{The Gibbons-Hawking Metrics}
\label{hyper} 

The first class of building blocks we will need consists of the Gibbons-Hawking gravitational 
instantons \cite{gibhawk}, which may be understood \cite{hitpoly} as Ricci-flat  K\"ahler
metrics on the minimal resolutions of the singular complex surfaces
$$xy=z^{\ell}.$$
Such a singular complex surface  can explicitly be identified with $\CC^2/\ZZ_\ell$, 
where 
$\ZZ_\ell \subset SU(2)$ is generated by
$$\left(\begin{array}{cc}e^{2\pi i\ell} & 0 \\0 & e^{-2\pi i\ell}\end{array}\right),$$
via the map 
\begin{eqnarray*}
\CC^2/\ZZ_\ell&\longrightarrow&\{~ (x,y,z)~~~|~~~ xy=z^\ell~\}\\
 \ZZ_\ell\cdot (u,v)&\longmapsto& (u^{\ell}, v^\ell ,uv ).
\end{eqnarray*}
The resolution of the singularity is accomplished by replacing the 
origin with a string of $\ell-1$ copies $\bcp_1$, each of self-intersection 
$-2$, and each only intersecting its succesor and/or predecessor, 
so that the pattern of intersections is dual to that indicated  by the Dynkin diagram $A_{\ell-1}$:
 \begin{center}
\mbox{
\beginpicture
\setplotarea x from 0 to 100, y from -10 to 10 
\put {$\bullet$} [B1] at 0 0 
\put {$\bullet$} [B1] at 15 0 
\put {$\bullet$} [B1] at 30 0 
\put {$\bullet$} [B1] at 45 0 
\put {$\bullet$} [B1] at 60 0 
\put{$\cdots$}[B1] at 73 0
\put {$\bullet$} [B1] at 85 0 
\put {$\bullet$} [B1] at 100 0 
{\setlinear  
\plot 0  3 60 3 /
\plot 85  3 100 3 /
}
\endpicture
}
\end{center}

Metrics in this family can be written  in closed form
by means of the {\em Gibbons-Hawking ansatz} \cite{gibhawk}. 
Choose   $\ell$ distinct point
$p_1, \ldots  , p_\ell$ in $\RR^3$, and set ${\mathcal U} = \{ p_1, \ldots , p_\ell\}$. 
Let $\rho_j : \RR^3 \to \RR$
be the Euclidean distance to $p_j$, and define  
$V: {\mathcal U} \to \RR^+$  by 
$$V = \sum_{j=1}^\ell \frac{1}{2\rho_j}.$$
Then $\Delta V =0$ on ${\mathcal U}$, so 
 the $2$-form
$$\star dV = V_x dy\wedge dz +V_y dz\wedge dx + V_z dy\wedge dz$$
is closed. Moreover, $[(\star dV)/2\pi] \in H^2 ({\mathcal U}, \ZZ)$,
since the integral of $\star dV$ on a small $2$-sphere centered at
 $p_j$ is $-2\pi$, and such  spheres generate $H_2 ({\mathcal U}, \ZZ )$.  
Let $\varpi: P\to {\mathcal U}$ be the circle bundle with first Chern class 
$[(\star dV)/2\pi]$. By the Chern-Weil theorem, there is a connection
$1$-form $\theta$ on $P$ with $d\theta = \varpi^* \star dV$. 
The Gibbons-Hawking metric on $P$ is then given by 
$$g_{GH} = V (dx^2 + dy^2 + dz^2) + V^{-1} \theta^{2}.$$
The metric-space completion $Y_\ell$ of $(P,g)$ is then 
a smooth Riemannian manifold, and is 
obtained by 
merely adding one point $\hat{p}_j$ for each each of the chosen 
points $p_j$, and we then have a smooth projection 
$$
\begin{array}{ccccc}Y_\ell & = & P & \cup & \{\hat{p}_1 , \ldots , \hat{p}_\ell\} \\
\downarrow &  & ~~\downarrow\varpi  &  & \downarrow \\
\RR^3  & = & {\mathcal U} & \cup & 
\{p_1 , \ldots , p_\ell\} \end{array}
$$
which, near each critical point $\hat{p}_j$,  is modelled on the 
map 
\begin{eqnarray*}
\RR^4 = \HH &\longrightarrow& \Im m~ \HH = \RR^3\\
q&\longmapsto& qi\bar{q} ~. 
\end{eqnarray*}
The Riemannian $4$-manifold $(Y_\ell ,g)$ is then 
ALE, with $\Gamma = \ZZ_\ell$, and is {\em hyper-Kahler},
in the sense that $\Lambda^+$ is trivialized by three linearly independent
parallel sections. It is also asymptotically locally Euclidean (ALE);
indeed, the region $\rho_1 > \max (|p_j - p_1|)$ in $Y_\ell$ may be identified
with the complement of a ball in $\RR^4/\ZZ_{\ell}$ in such a
manner such that $\varrho= \sqrt{2\ell \rho_1}$ becomes the Euclidean 
radius, and such that 
$$ g = g_{\mbox{\tiny Eucl}}+ O(\varrho^{-4}), ~~ \partial^k g = O(\varrho^{-4-k}).$$

\subsection{Line-Bundle Metrics}

The second family  of ALE metrics we will need for our construction consists of  the
so-called LeBrun metrics \cite{lpa} on the total spaces of negative-degree 
complex line bundles  $L\to \CP_1$
over the $2$-sphere. 
For each integer  $\ell \geq 1$, such a metric on the $c_1= -\ell$ line bundle may be 
obtained by taking the metric-space completion of the metric 
\begin{equation}
\label{lbg} 
g_{LB}=\frac{d\varrho^2}{1+\frac{\ell -2}{\varrho^2}- \frac{\ell -1}{\varrho^4}} 
+ \varrho^2\left(\sigma_1^2 + \sigma_2^2 +
\left[ 1+\frac{\ell -2}{\varrho^2}- \frac{\ell -1}{\varrho^4}\right]
\sigma_3^2
\right)
\end{equation}
on $(\CC^2- B)/\ZZ_\ell$, where $B\subset \CC^2$ is the closed unit ball, and
where the $\ZZ_\ell$ action is generated by scalar multiplication by $e^{2\pi i/\ell}$
on $\CC^2$. Here $\sigma_1$, $\sigma_2$, $\sigma_3$ is the standard $SU(2)$-invariant
orthonormal co-frame on the the unit sphere $S^3\subset \RR^4= \CC^2$,
and it is therefore easy to show that $g$ is $U(2)$-invariant.
When a $2$-sphere (corresponding to the zero section of the line bundle)
is added along $\varrho=1$, the metric extends smoothly and becomes complete. 
For our purposes, we will need only the cases $\ell \geq 2$. Note that 
when $\ell =2$, $g$ is  the celebrated Eguchi-Hanson metric, and is also given
by  the $\ell =2$ case of the Gibbons-Hawking construction described in 
\S \ref{hyper}. When  $\ell \neq 2$, however, 
the metric is no longer Ricci-flat; moreover, it is then just ALE of order $2$,  rather than of order
$4$:
$$ g_{LB} = g_{\mbox{\tiny Eucl}}+ O(\varrho^{-2}), ~~ \partial^k g = O(\varrho^{-2-k}).$$

\subsection{A Vanishing Theorem}
\label{flash}

The examples described  in the two previous sub-section are all 
ALE scalar-flat K\"ahler manifolds. However, 
that these examples by no means constitute an exhaustive list; there are many others 
 \cite{caldsing,kron}. 
 The purpose of the present section is to prove a
 general vanishing result pertaining to all such ALE spaces.
 We begin with a lemma which provides useful information on the 
  asymptotic structure of such manifolds. 

\begin{lem} \label{labby} 
Let $(Y,J,g)$ be a complete  scalar-flat K\"ahler manifold of real dimension $4$. 
Suppose, moreover, that $g$ is  ALE in the weak sense 
that 
$$g =  g_{\mbox{\tiny Eucl}}+ O(\varrho^{-\frac{3}{2}-\delta}) ,  
~~~\partial g =  O(\varrho^{-\frac{5}{2}-\delta}) $$
for some $\delta > 0$ in some real asymptotic coordinate system. Let
$U\subset Y$ be the domain of such an asymptotic coordinate system, 
and let $\tilde{U}\approx S^3 \times \RR$ be its universal cover. Then 
there is a non-singular  complex surface $(S,J)$ obtained by 
adding a $\CP_1$ of self-intersection $+1$ to $\tilde{U}$ at
infinity. Moreover,  small deformations of this holomorphic curve
pass through every point of $\tilde{U}$ in a neighborhood of infinity. 
 The complex surface  $(Y,J)$ therefore has only one end, and 
 is obtained from a non-singular, rational complex surface by removing 
a (typically singular) rational curve. 
Moreover,  $g$ is strongly ALE, of order $\geq 2$,
in the sense that in a (perhaps better) asymptotic coordinate chart, 
$$g =  g_{\mbox{\tiny Eucl}}+ O(\varrho^{-2}) ,  
~~~\partial^k g =  O(\varrho^{-2-k}) $$
for every positive integer $k$. 
\end{lem}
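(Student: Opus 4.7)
The plan is to construct a holomorphic compactification of $\tilde{U}$ by adjoining a single smooth rational curve modeled on the line at infinity in $\CP_2$, and then to bootstrap the scalar-flat K\"ahler equation in intrinsic coordinates coming from this compactification so as to upgrade the decay hypothesis.

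I would start by exploiting $\nabla J=0$. The hypothesis $g=g_{\mbox{\tiny Eucl}}+O(\varrho^{-3/2-\delta})$ with $\partial g=O(\varrho^{-5/2-\delta})$ forces the Christoffel symbols of $g$ to be $O(\varrho^{-5/2-\delta})$, so parallel transport from infinity differs from Euclidean parallel transport by an integrable amount along radial rays of $\tilde{U}\approx\RR^4\setminus\bar B_R$. Hence $J$ is asymptotic at infinity to a fixed Euclidean-parallel complex structure $J_0$, which after a rotation I may take to be the standard complex structure on $\CC^2$, with $|J-J_0|=O(\varrho^{-3/2-\delta})$. In the inverted chart $(\zeta,u)=(z^1/z^2,1/z^2)$ of the model $\CP_2=\CC^2\cup L$, constructing $J$-holomorphic coordinates $(w^1,w^2)$ asymptotic to $(\zeta,u)$ reduces to a $\bar\partial_J$-equation whose inhomogeneous term has size $O(\varrho^{-3/2-\delta})$. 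Standard weighted-space estimates for $\bar\partial$ on an ALE end then produce a solution with the required asymptotics; I expect this step to be the main technical obstacle, and the decay rates in the hypothesis are calibrated precisely to place the source just above the critical weight for $\bar\partial$ on $\CC^2$.

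The resulting coordinates exhibit $\tilde{U}$ as the complement of a neighborhood of a smooth rational curve $C$ inside a smooth complex surface $S$, in such a way that a neighborhood of $C$ in $S$ is biholomorphic to a neighborhood of $L\subset\CP_2$. In particular the normal bundle of $C$ is $\mathcal{O}_C(1)$, which accounts for the self-intersection $+1$. Since $H^1(C,\mathcal{O}(1))=0$, Kodaira's stability theorem produces a two-parameter linear system of deformations of $C$ in $S$, and because the line bundle $\mathcal{O}(1)$ on $C\cong\CP_1$ is globally generated these deformations sweep out a full neighborhood of $C$, establishing the second conclusion. The rational-surface structure and uniqueness of the end then follow formally: the existence of a smooth $+1$-curve with moving deformations forces $S$ to be birational to $\CP_2$, and descending through the deck-group action on $\tilde{U}$ and gluing to $Y\setminus U$ realizes $(Y,J)$ as the complement of a (typically singular) rational curve in a smooth rational surface, which automatically has only one end.

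To obtain the strong ALE conclusion, I would write the K\"ahler form in the intrinsic coordinates $(w^1,w^2)$ as $\omega=i\,\partial\bar\partial K$ and use that scalar-flatness is equivalent to $\log\det K_{i\bar j}$ being pluriharmonic. Setting $K=\tfrac{1}{2}|w|^2+\phi$, the equation for $\phi$ is quasilinear elliptic with coefficients extending smoothly across $C$, so a standard weighted-Schauder bootstrap combined with smoothness of the linearization across the compact divisor $C$ yields $\phi=O(\varrho^{-2})$ and $\partial^k\phi=O(\varrho^{-2-k})$ for all $k$, which gives $g=g_{\mbox{\tiny Eucl}}+O(\varrho^{-2})$ with all derivatives decaying at the stated rates in the associated real coordinate system.
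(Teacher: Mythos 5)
Your central step is genuinely different from the paper's, and unfortunately it contains essentially the gap that this paper was partly written to repair. The paper does not construct holomorphic coordinates on the end at all: following \cite[Proposition 12]{chenlebweb}, it compactifies the \emph{twistor space} of $\tilde{U}$ by adding a twistor line $P$, uses Pontecorvo's theorem to realize $S_0\cup\bar{S}_0$ as the zero locus of a holomorphic section of $\kappa_Z^{-1/2}$, extends that section across $P$ by Hartogs, rules out the non-vanishing alternative by completeness of $g$, and obtains the nonsingular surface $S=S_0\cup P$ via the nullstellensatz, with the normal bundle $\mathcal{O}(1)$ coming from adjunction. By contrast, if your $\bar\partial_J$-equation could be solved with $(w^1,w^2)$ asymptotic to the standard inverted coordinates in each chart, the end would be asymptotically biholomorphic to a standard deleted neighborhood of the line in $\CP_2$ -- i.e.\ the complex structure would be asymptotically standard. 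The discussion preceding Theorem \ref{sesame} states explicitly that this is ``simply not true in general'': the germ of a neighborhood of a $+1$-curve is not rigid, and the nonstandard jets show up precisely as critical-weight/cokernel phenomena for $\bar\partial_J$ on the end (note also that in the inverted chart the perturbation $J-J_0$ is only H\"older up to the divisor, so even local integrability across $C$ is delicate). The ``standard weighted-space estimates'' you invoke therefore cannot produce the asserted solution; the correct conclusion is only that a $+1$-curve can be adjoined, not that its neighborhood is the standard one.

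There are further problems downstream. Scalar-flatness of a K\"ahler metric says that $\log\det(K_{i\bar\jmath})$ is \emph{harmonic with respect to $g$}, not pluriharmonic -- pluriharmonicity is Ricci-flatness -- so your bootstrap is run on the wrong equation; and the claimed decay $\phi=O(\varrho^{-2})$, $\partial^k\phi=O(\varrho^{-2-k})$ would give $g=g_{\mbox{\tiny Eucl}}+O(\varrho^{-4})$, which contradicts both your stated conclusion and the known examples (the LeBrun metrics of \S 4.2 are ALE only of order $2$). The paper does not reprove this decay upgrade; it is quoted from \cite{chenlebweb}. The one-end statement is also not ``automatic'': the paper compactifies every end (taking care with the deck group, whose action on $TS|_P$ lands in $U(2)$, so that the quotient is resolved by Hirzebruch--Jung strings), proves the compactification $\hat{Y}$ is rational via vanishing of $H^0(\Omega^1)$ and of the plurigenera together with the Enriques--Kodaira classification, and then observes that two ends would yield disjoint curves of positive self-intersection, contradicting $b_+(\hat{Y})=1$. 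Your Kodaira-deformation step (normal bundle $\mathcal{O}(1)$, $H^1(\CP_1,\mathcal{O}(1))=0$, curves sweeping out a neighborhood) does match the paper, but the construction of $S$ itself needs the twistor-theoretic argument.
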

\begin{proof}
The fact  that the weak form of the ALE condition implies the stronger
form for a scalar-flat K\"ahler surface is proved in  \cite[Proposition 12]{chenlebweb}
by first showing that adding an extra twistor line $P$  at infinity to the twistor space of 
$\tilde{U}$ still yields a complex manifold $Z$. Let $S_0$  be the image of the section of 
the twistor projection $(Z-P) \to \tilde{U}$ given by $J$, and let
$\bar{S}_0$ similarly corresponding to $-J$. Then $S_0\cap \bar{S}_0 = \varnothing$,
and, because $g$ is scalar-flat K\"ahler,  Pontecorvo's theorem 
\cite{mano} tells us that $S_0\cup \bar{S}_0$ is the 
non-degenerate zero locus of a holomorphic section of $\kappa_Z^{-1/2}$. 
However, since $P\subset Z$ has complex codimension $2$, this section 
  uniquely extends  to all of $Z$ by Hartog's theorem. Since $S_0$ and $\bar{S}_0$ are
interchanged by the real structure of $Z$, this section is real, and its restriction 
to $P$, which is essentially  a section of ${\mathcal O}(2)$ on 
$\CP_1$, must either vanish identically, or vanish only at an antipodal   pair  of 
points. In the latter case, however, Pontecorvo's theorem  would say 
that $g$ extended to $\tilde{U}\cup \{ \infty\}$ as a scalar-flat K\"ahler metric,
contradicting the fact that $g$ is complete. The extension therefore vanishes
on all of $P$, and $S_0\cup \bar{S}_0 \cup P$ is therefore a (singular) complex hypersurface in 
$Z$. Let  $S$ be the irreducible component of this surface containing $S_0$.
Since every point of $P$ is either in the closure of $S_0$ or in the closure of 
$\bar{S_0}$, $P$ must meet $S$ in infinitely points. Hence  $P\subset S$,
and $S=S_0\cup P$.  Since a generic complex twistor line therefore
meets
$S$ in exactly one point, and since we can foliate any small region of 
$Z$ with a holomorphic family of 
 complex twistor lines which are not contained in 
$S$, 
the nullstellensatz \cite{GH} now implies that $S$ is actually non-singular.
Thus $S$ is a smooth complex surface obtained by adding $P\cong \CP_1$
to $S_0= (\tilde{U}, J)$. 
 Moreover, since
$P\subset Z$ has normal bundle ${\mathcal O}(1) \oplus {\mathcal O}(1)$,
whereas  $P\cdot S =1$, the adjunction formula implies that 
the normal bundle of $P\subset S$ has degree one. 

The normal bundle of the projective 
line $P\subset S$ is therefore   ${\mathcal O}(1)$. Since 
$H^1 (\CP_1 , {\mathcal O}(1))=0$,  Kodaira's theorem on 
deformations of compact complex submanifolds \cite{kodsub} guarantees that 
every element of $H^0 (\CP_1 , {\mathcal O}(1))$ arises from a
variation of $P\subset S$ through holomorphic curves,
and the union of these curves must therefore fill out some open 
 neighborhood $V\subset S$ of $P$. 
 
 The fundamental group $\Gamma$ of the end $U$  acts on 
 $\tilde{U}$ by isometries preserving $J$, and these
 extend to $\tilde{U} \cup \{ \infty\}$ as conformal isometries. 
 These in turn  lift to the twistor space $Z$ as biholomorphisms
 sending $S$ to $S$ and $P$ to $P$.  In particular, $\Gamma$ acts holomorphically 
 on $S$ in a manner sending $P$ to itself. Since the automorphism group of 
 the  first infinitesimal neighborhood of $P\subset S$ is $GL(2, \CC)$, the action of 
 the finite group  $\Gamma$ acts on $TS|_P$   is realized by that 
 of a finite subgroup of the maximal compact 
 subgroup $U(2)\subset GL(2, \CC)$. 
 If $\Gamma_0\subset \Gamma$ is the normal subgroup corresponding 
 to the center $U(1)\subset U(2)$, the quotient $\check{S} = S/\Gamma_0$ 
 is then a smooth complex surface, and $S\to \check{S}$ is then a branched cyclic cover 
  ramified along $P$. The induced action of $\check{\Gamma}=\Gamma/\Gamma_0$
  on $\check{S}$ 
 then has only cyclic isotropy groups with isolated fixed points,  so
 we can resolve all the singularities of $S/\Gamma= \check{S}/\check{\Gamma}$ 
  by replacing them with Hirzebruch-Jung strings \cite[Theorem 5.4]{bpv}. 
 Notice that the generic deformation of $P$ in $S$  
 will descend to $\check{S}$ as a $\CP_1$ of self-intersection $| \Gamma_0| \geq 1$,
 and that any such curve which avoids the finite fixed-point set of $\check{\Gamma}$
 will then become an immersed $\CP_1$ with positive normal bundle in the resolution of 
  $S/\Gamma$.

Applying this picture,  we may thus   compactify $Y$ as   a non-singular 
 compact complex surface $\hat{Y}$ by adding a tree of $\CP_1$'s to each
 end. But the compactification $\hat{Y}$ then   in particular contains
  holomorphically
 immersed $\CP_1$'s  with positive normal bundle passing through each
 point of in a
 region at each  end of $Y$. Since the 
  pluricanonical line-bundle $K^m$ and the
  cotangent bundle $\Omega^1$ pull back to any such curve as negative bundles, 
  all holomorphic sections of $K^m$ or $\Omega^1$
 must  vanish along all  such  curves, and since these curves
 sweep out an open subset of  $\hat{Y}$,   uniqueness of  analytic continuation 
 implies that $H^0 (\hat{Y}, \Omega^1)=0$ and $H^0 (\hat{Y}, {\mathcal O} (K^m))=0$ 
 for all $m > 0$. Moreover, the homology class of any such 
curve has positive self-intersection, as may be seen by 
moving a given $\CP_1$ within the appropriate Kodaira family; 
hence $b_+(\hat{Y}) \neq 0$. The Kodaira-Enriques classification \cite{bpv,GH} therefore
tells  us that 
 $\hat{Y}$ is a blow-up of either   $\CP_2$ or a Hirzebruch surface. 
 In  particular,
 $b_+(\hat{Y}) =1$. It follows that  $Y$ can only one end; otherwise, we would get two disjoint curves
 of positive self-intersction from any pair of ends, forcing $b_+$ to be larger. 
Hence    $Y$ is obtained
 from a  rational complex surface 
 $\hat{Y}$ by removing a (possibly  singular) embedded rational curve,  as claimed. 
  \end{proof}

With the ground now properly prepared, we are  now ready
to prove our main vanishing result.  Actually, 
 this very result was previously stated in \cite[Theorem 8.4]{kovsing}, but  the proof given
there  unfortunately assumes that the complex structure is asymptotically biholomorphic
to $\CC^2/\Gamma$,  and this,   alas, is     simply not true in general. 
However,   we shall now see that much the same  proof can be made 
to work in full generality with a modicum of extra care. 

\begin{thm} 
\label{sesame} 
Let $(Y,g)$ be any ALE  scalar-flat Kahler manifold
of real dimension $4$, 
and let $(\hat{Y}, [g])$ be its $1$-point conformal compactification,
considered as a compact  ASD orbifold. 
Then the deformation theory of ASD conformal structures on 
$\hat{Y}$ is unobstructed at $[g]$. 
That is,  the linearized self-dual Weyl curvature
$$DW_+ : C^\infty ( \odot^2_0T^*Y ) \to C^\infty  ( \odot^2_0\Lambda^+ )$$
is surjective at $[g]$. 
\end{thm}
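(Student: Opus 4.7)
The plan is to establish surjectivity of $DW_+$ on the compact ASD orbifold $\hat Y$ via the dual statement $\ker (DW_+)^* = 0$. Elliptic regularity reduces this to a question about smooth sections $\varphi\in C^\infty(\hat Y, \odot^2_0\Lambda^+)$ with $(DW_+)^*\varphi = 0$, and the boundedness of such a $\varphi$ as a section of the conformally invariant bundle $\odot^2_0\Lambda^+$ translates, under the one-point compactification that identifies a neighbourhood of the orbifold point with a neighbourhood of infinity in $(Y,g)$, into a definite polynomial decay rate for $\varphi$ at infinity relative to the asymptotically Euclidean coordinates provided by Lemma \ref{labby}.

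Next, I would exploit the scalar-flat K\"ahler structure on $(Y,g)$ to convert $(DW_+)^*\varphi = 0$ into a holomorphic condition on $(Y,J)$. The key is the canonical $g$-parallel splitting $\Lambda^+ = \RR\omega \oplus (\Lambda^{2,0}\oplus\Lambda^{0,2})_{\RR}$, which induces a corresponding decomposition of $\odot^2_0\Lambda^+$ into a rank-one real summand and a rank-four summand isomorphic to the real form of $K\oplus K^{\otimes 2}$. A Weitzenb\"ock-type computation of the kind carried out in the Pontecorvo--LeBrun analysis of twistor spaces of scalar-flat K\"ahler surfaces, and reprised in the gluing arguments of \cite{klp,rollsing}, shows that when $s\equiv 0$ the equation $(DW_+)^*\varphi = 0$ decouples into an over-determined elliptic system whose nontrivial content is governed by a holomorphic section $\sigma$ of the canonical sheaf $\mathcal O (K)$ on $(Y,J)$, with the remaining components of $\varphi$ determined algebraically by $\sigma$. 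The pointwise decay of $\varphi$ then transfers, via the strong order-$2$ ALE estimate $g=g_{\text{Eucl}}+O(\varrho^{-2})$ of Lemma \ref{labby}, into an $L^2$ (hence pointwise) bound on $\sigma$ near infinity.

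Finally, I would invoke the second half of Lemma \ref{labby}, which compactifies $(Y,J)$ to a smooth rational complex surface $\hat Y_{\text{cpx}}$ by adjoining a (possibly singular) rational divisor $D$ at infinity. Using the Kodaira family of $\CP_1$'s sweeping out a neighbourhood of $D$, together with the growth bound on $\sigma$, a Hartogs-type argument extends $\sigma$ across $D$ to a global holomorphic section of $\mathcal O(K)$ on $\hat Y_{\text{cpx}}$. But $\hat Y_{\text{cpx}}$ is rational, so $p_g(\hat Y_{\text{cpx}}) = h^0(\mathcal O(K)) = 0$; thus $\sigma \equiv 0$, whence $\varphi \equiv 0$, as required. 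The principal obstacle --- and precisely the site of the gap in the Kovalev--Singer argument --- is this extension step: the earlier proof tacitly presumed that $(Y,J)$ was asymptotically biholomorphic to $\CC^2/\Gamma$, which Lemma \ref{labby} shows is in general false, so the repair consists in routing the extension through the rational compactification of Lemma \ref{labby} while carefully tracking how the decay rate of $\varphi$ limits the admissible order of poles of $\sigma$ along the possibly singular components of $D$.
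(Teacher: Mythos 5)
Your overall strategy---dualize to $\ker(DW_+)^*$, use the K\"ahler splitting of $\Lambda^+$, reduce to holomorphic data on $(Y,J)$, and kill that data using the compactification of Lemma \ref{labby}---is the right skeleton, but the reduction step as you state it is not correct, and it conceals most of the actual work. Writing $\odot^2_0\Lambda^+\cong \RR\oplus\Re e(K)\oplus\Re e(K^{\otimes 2})$, it is \emph{not} true that the first two summands are ``determined algebraically'' by a holomorphic section, nor is the essential holomorphic object a section of $K$. The paper contracts $\varphi$ with $\omega$ to get $\phi=\varphi(\omega)=f\omega+\alpha$, and the equation $(DW_+)^*\varphi=0$ forces $f$ to satisfy the fourth-order Lichnerowicz equation $(\bar\partial\partial^\#)^*(\bar\partial\partial^\#)f=0$; two integrations by parts (justified by the $O(\varrho^{-4})$ decay) show $J\nabla f$ is a Killing field, which vanishes because a conformal Killing field on $\hat Y$ with a high-order zero at the point at infinity is zero. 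A separate Weitzenb\"ock/Stokes argument then kills $\alpha$. Only after both of these does one reach the genuinely holomorphic part, and it lives in $\kappa^{\otimes 2}$, not $\kappa$: one has $\varphi=\beta+\bar\beta$ with $\beta\in\Gamma(\kappa^{\otimes 2})$. Moreover $\beta$ is not a priori holomorphic --- the equation only gives $\nabla^\kappa\nabla^\mu\beta_{\kappa\lambda\mu\nu}=0$, so one must first show that the auxiliary section $\psi=\nabla^\mu\beta\in\Gamma(\Omega^1\otimes\kappa)$ is holomorphic and vanishes before concluding $\bar\partial\beta=0$. Your proposal skips all of these steps.

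The final vanishing is also organized differently from what you suggest, and your version would need repair even with the bundle corrected to $K^{\otimes 2}$ (where the relevant vanishing is $P_2=0$, not $p_g=0$). The paper does not extend the holomorphic sections across the possibly singular divisor at infinity of the global rational compactification; instead it works on the universal cover $\tilde U$ of the end, where Lemma \ref{labby} provides a smooth surface $S$ containing a $\CP_1$ of self-intersection $+1$ whose Kodaira deformations sweep out a neighbourhood of infinity. Since $\Omega^1\otimes\kappa$ and $\kappa^{\otimes 2}$ restrict to these curves as $\mathcal O(-5)\oplus\mathcal O(-4)$ and $\mathcal O(-6)$ respectively, the continuously (hence holomorphically) extended sections $\psi$ and $\beta$ vanish on an open set, and then everywhere by analytic continuation. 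This sidesteps precisely the delicate pole-order bookkeeping along a singular compactifying divisor that your Hartogs-type route would require. To make your argument sound you would need to (i) supply the Lichnerowicz and Weitzenb\"ock arguments for the $f$ and $\alpha$ components, (ii) replace $K$ by $K^{\otimes 2}$ and insert the intermediate vanishing of $\psi\in\Gamma(\Omega^1\otimes\kappa)$ needed to establish holomorphicity of $\beta$, and (iii) either justify the extension across the singular divisor or, more simply, localize the vanishing argument to the family of $+1$-curves near infinity as the paper does.
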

\begin{proof}
Consider the adjoint equation 
\begin{equation}
\label{coke}
(DW_+)^*\varphi =0
\end{equation}
for an element  $\varphi \in \Gamma (\odot^2_0\Lambda^+)$
of the cokernel on the orbifold $\hat{Y}$. 
If ${\varphi^a}_{bcd}$ is simply considered as a totally trace-free 
 tensor field
 with the same symmetries as $W_+$, then
 (\ref{coke}) takes the explicit form
 \begin{equation}
\label{pepsi} 
\left(\nabla^b\nabla^d + \frac{1}{2}{r}^{bd}\right) {\varphi^a}_{bcd}=0,
\end{equation}
so   $(DW_+)(DW_+)^*$  is elliptic, and any distributional solution $\varphi$ on   
$\hat{Y}$ is necessarily smooth in the orbifold sense. Moreover, 
$DW_+$ belongs to an elliptic complex, so that $DW_+$ is surjective
iff (\ref{coke}) has no non-trivial solutions. 
Now the geometric nature of the construction of $DW_+$ implies
that it is conformally invariant; and since the $L^2$ inner product on 
such tensor fields ${\varphi^a}_{bcd}$ is conformally invariant, equation 
(\ref{coke}) is conformally invariant without any added conformal weight.
Thus the pull-back of $\varphi$  to $Y$  satisfies  both (\ref{coke})
and the  asymptotic fall-off conditions 
$$
|\varphi | = O (\rad^{-4}) , ~~ |\nabla^k \varphi | = O (\rad^{-4-k})
$$
at infinity, 
where $\rad$ denotes the Euclidean radius function in the 
asymptotic chart, and where the norms and derivatives are all  with respect to $g$. 
Working henceforth only on $(Y,g)$,  
our objective is therefore  to show that (\ref{coke})  and these boundary conditions 
suffice to imply that $\varphi$ vanishes.

To this end, think of $\varphi$ as belonging to $\mbox{End }(\Lambda^+)$,
and consider the  self-dual
$2$-form $\phi$ obtained by 
applying this endomorphism to the 
 K\"ahler form $\omega$; that is, let 
$$\phi= \varphi (\omega) \in \Gamma (\Lambda^+)$$
be the contraction of $\varphi$ with $\omega$. 
We then  recall that 
$$\Lambda^+ = \RR \omega \oplus \Re e (K)$$
on any K\"ahler surface, where $K=\Lambda^{2,0}$ is the
canonical line bundle. Thus
$$
\phi = f\omega + \alpha
$$
for some function $f$ and the real part $\alpha$ of some $(2,0)$-form. 
Then (\ref{coke}) implies \cite[Corollary 2.3]{lebsing1} that 
$$
d^-\delta ( f\omega + \alpha) -  f \rho =0,
$$
where $\rho$ is the Ricci form of $g$. 
Using the K\"ahler identities to rewrite  this    as 
$$
(d\Lambda d^c + d^c\Lambda d ) \alpha - 2dd^c f - \omega \Delta f - 2 f \rho = 0
$$
and then 
applying $dd^c$, we thus obtain  the fourth order scalar equation 
$$
(\Delta^2  + 2 r \cdot \nabla \nabla ) f = 0. 
$$
However, this  so-called Lichnerowicz equation can be re-written \cite{ls}  as 
$$\nabla_\nu\nabla^{\bar{\mu}} \nabla_{\bar{\mu}}\nabla^\nu f = 0,$$
or, in other words, as  
$$(\bar{\partial}\partial^\#)^*(\bar{\partial}\partial^\#)f=0,$$
where $\partial^\#$ means the $(1,0)$ component of the gradient. 
But since 
$$
|\nabla^k f | = O (\rad^{-4-k})
$$
two integrations by parts give us 
$$
\int_Y |(\bar{\partial}\partial^\#)f|^2 d\mu = \int_Y f (\bar{\partial}\partial^\#)^*(\bar{\partial}\partial^\#)f
d\mu =0. 
$$
Hence $\bar{\partial}\partial^\# f=0$, and 
$\partial^\#f = (\nabla f - iJ \nabla f)/2$ is thus a holomorphic vector field.
But this means that the Hamiltonian vector field $J\nabla f$ preserves both the 
K\"ahler form $\omega$ and the complex structure $J$. Therefore $\xi = J \nabla f$
also preserves $g$, and so 
is a Killing vector field. In particular, $\xi$ may be viewed as a conformal Killing field on 
$(\hat{Y}, [g])$. 
But our fall-off conditions say that $\xi = O (\rad^{-5} )$, and
in inverted coordinates about the  point at infinity, $\xi$ therefore has a zero of order $6$.
Since a conformal Killing field is completely determined by its $2$-jet at any 
point, it follows that 
 $\xi = J \nabla f$ vanishes identically. This shows that $f$ is constant, and our
fall-off therefore implies that $f\equiv 0$. 

It follows that the self-dual $2$-form $\phi = \alpha$ satisfies
$$d^- \delta \phi = 0.$$
Hence $d \delta \phi$ is self-dual, and  $|d\delta \phi |^2 d\mu =d\delta \phi \wedge d\delta \phi$.
Integration therefore gives 
$$\int _Y |d\delta \phi |^2 d\mu = \int _Y d\left( \delta \phi \wedge d\delta \phi \right) =0$$
by Stokes' theorem and the fact that $|\delta \phi \wedge d\delta \phi | = O (\rad^{-11})$. 
Thus $d\delta \phi =0$, and the self-dual $2$-form $\phi$ is therefore killed
by the Hodge Laplacian. But since our manifold has $s=0$ and $W_+=0$,
the Weitzenb\"ock formula for the Hodge Laplacian just says
$$(d+\delta)^2 \phi = \nabla^*\nabla \phi = 0.$$
Since $\phi \cdot \nabla \phi = O (\rad^{-9})$, integration by parts therefore
says that 
$$
\int_Y |\nabla \phi |^2 d\mu = \int_Y \langle \phi , \nabla^* \nabla \phi \rangle  d\mu = 0
$$
so that $\nabla \phi=0$ is parallel, and so in particular, $|\phi |$ is constant. But
$|\phi |\to 0$ at infinity, so it follows that $\phi \equiv 0$.

We must therefore have $\varphi = \beta + \overline{\beta}$ for some 
$\beta \in \Gamma ( \kappa^{\otimes 2})$, where $\kappa= \Lambda^{2,0}$ is the canonical
line bundle. Because
the Ricci tensor belongs to $\Lambda^{1,1}$, equation (\ref{pepsi}) now
simplifies to become $$\nabla^\kappa\nabla^\mu \beta_ {\kappa  \lambda \mu\nu}=0.$$
Letting  $\psi \in \Gamma (\Omega^1 \otimes \kappa)$ be defined by 
$$\psi_{\kappa  \lambda \nu} = \nabla^\mu  \beta_{ \kappa  \lambda \mu \nu},$$
we then have $\partial^* \psi =0$, so $\overline{\partial}\psi =0$, and $\psi$
is holomorphic.

 Now let $U\subset Y$ be an asymptotic region of 
$Y$, and let $\tilde{U} \approx S^3 \times \RR$ be its universal cover. 
We have  shown, in  Lemma \ref{labby}, that 
we can  form a complex surface $(S,J)$ by adding a holomorphic curve $P\cong \CP_1$ of self-intersection 
$+1$  at infinity; moreover, deformations of   $P$ then 
gives us  rational holomorphic curves of self-intersection 
$+1$ sweeping out an open neighborhood $V\subset S$ of $P$.
The asymptotic fall-off of $\phi$ now implies that the pull-back  of $\psi$ 
to $\tilde{U}$ extends
continuously, and hence holomorphically, to $S$. 
 But $V$ is swept out by $\CP_1$'s with 
 normal bundle ${\mathcal O}(1)$. Since the restriction of 
 $\Omega^1 \otimes \kappa$ to such a curve is isomorphic to 
 ${\mathcal O} (-5) \oplus {\mathcal O} (-4)$, it follows that 
 $\psi$ vanishes identically on $V$, hence on its image in $Y$,  and thus on all of $Y$
 by the uniqueness of analytic continuation. 
 
 The vanishing of $\psi$ now tells us that 
 $$g^{\kappa \bar{\pi}} \nabla_{\bar{\pi}} \beta_{\kappa \mu \nu \lambda}
 =  \nabla^{\kappa} \beta_{\kappa \mu \nu \lambda}=0$$
 so we now have $\overline{\partial} \beta=0$, and $\beta$ is 
 itself holomorphic. But once again, our fall-off conditions imply that the pull-back of 
 $\beta$ to $\tilde{U}$ extends holomorphically
 to $S$. Since the restriction of $\kappa^{\otimes 2}$ to a $\CP_1$ of self-intersection 
 $+1$  is isomorphic to 
 ${\mathcal O}(-6)$, we thus see that $\beta$ must vanish identically
 on  the open set $V\subset S$ swept out by 
 such curves, and hence on all of $Y$ by analytic continuation. 
 Thus $\varphi = \beta+ \bar{\beta} \equiv 0$, and the cokernel of 
 $DW_+$ is trivial, as claimed. 
\end{proof}

\section{The Gluing Construction}

We are now in a position to construct the desired family of anti-self-dual metrics on 
$k\overline{\CP}_2$, $k\geq 5$. The basic tool we will need is the following gluing result 
\cite[Theorem C]{kovsing}; cf. \cite{floer, tasd}. 

\begin{prop}[Floer {\em et al.}] \label{gluing} 
Let $(V_j,[g_j])$ be a finite collection of smooth compact oriented $4$-dimensional
orbifolds with anti-self-dual conformal structure, and suppose that
these conformal structures are {\em deformation-unobstructed}  in the sense that 
$$DW_+ : C^\infty ( \odot^2_0T^*Y ) \to C^\infty  ( \odot^2_0\Lambda^+ )$$
is surjective for each of these spaces.  Further, suppose we are given 
 {\em gluing data}, consisting of a finite collection of pairs $(p_\alpha, q_\alpha) 
 \in \coprod  V_j$ of distinct points, together with a collection of  linear maps 
 $$\Lambda_\alpha: T_{p_\alpha} \to T_{q_\alpha}$$
 between the orbifold tangent space of $\coprod  V_j$
 at these points, such that $\Lambda_\alpha$
 is an orientation-reversing conformal isometry, and such that, 
 for each $\alpha$,  either 
\begin{itemize}
\item $p_\alpha$ and $q_\alpha$ are both 
manifold (non-singular) points of $ \coprod  V_j$; or else 
\item $p_\alpha$ and $q_\alpha$ are both 
orbifold (singular) points, the relevant orbifolds are locally modelled on 
 $T_{p_\alpha} /\Gamma_\alpha$ and  $T_{q_\alpha} /\Gamma_\alpha$ 
 for the same finite group $\Gamma_\alpha$,  and the 
 relevant representations of  $\Gamma_\alpha$ are intertwined
 by  $\Lambda_\alpha$. 
\end{itemize}
Let $M$ be the new orbifold obtained from $\coprod  V_j$
by first removing a small ball around each $p_\alpha$ and $q_\alpha$,
and then identifying the resulting boundaries via the orientation-reversing 
diffeomorphism indicated by $\Lambda_\alpha$. 
Then $V$ admits a  family of anti-self-dual metrics $[g_{\mathfrak u}]$,
smoothly  (but perhaps redundantly)  parameterized
by  deformation-unobstructed ASD conformal structures
on  $\coprod V_j$ and choices of gluing data. 
Moreover, these conformal structures resemble those specified on each $V_j$
in the following sense: there are sequences of these $g_{u_I}$ of these
metrics and diffeomorphisms $\Phi_I : (V_j - \{ p_\alpha , q_\alpha \}) \hookrightarrow
M$ such that the conformal metrics $[\Phi_I^* g_{u_I}]$ converge to  $[g_j]$ in  the 
$C^2$ topology 
on every compact subset ${\mathbf K} \subset (V_j - \{ p_\alpha , q_\alpha \})$. 
\end{prop}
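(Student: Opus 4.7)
The plan is to follow the standard gluing strategy pioneered by Taubes, Floer, and carried out in the orbifold ASD setting by Kovalev--Singer. First I would construct an approximate solution. On each $V_j$, excise a small geodesic ball of radius $\epsilon$ about each marked point $p_\alpha$ and $q_\alpha$, choose conformal representatives that are asymptotically flat near those points, and then glue the truncated pieces together through short annular necks using the conformal isometries $\Lambda_\alpha$, interpolating with a smooth cut-off. The result is an approximate metric $g_\epsilon$ on the glued orbifold $M$ whose self-dual Weyl curvature $W_+(g_\epsilon)$ is supported in the neck regions and which is small, say of order $\epsilon^2$, in a suitable weighted norm on $M$.

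Next I would set up the perturbation problem. Writing a prospective ASD metric as $\tilde g = g_\epsilon + h$ with $h\in C^\infty(\odot^2_0\Lambda^1)$ small, the equation $W_+(\tilde g)=0$ takes the schematic form
\begin{equation*}
DW_+(h) + Q(h) = -\,W_+(g_\epsilon),
\end{equation*}
where $Q$ collects the quadratic and higher-order remainder and $DW_+$ is the linearization at $g_\epsilon$. To solve this by a contraction mapping argument it is enough to produce a right inverse $P_\epsilon$ of $DW_+$ on $(M,g_\epsilon)$ whose operator norm, between appropriate weighted Sobolev spaces, is bounded uniformly in $\epsilon$. This is the heart of the proof and where the main obstacle lies: the geometry of the necks degenerates as $\epsilon\to 0$, and one must patch together the local right inverses on each $V_j$, whose existence is guaranteed by the deformation-unobstructedness hypothesis (surjectivity of $DW_+$, equivalently vanishing of the cokernel $\ker(DW_+)^*$), without losing uniform control.

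Concretely, I would introduce weighted norms on $M$ adapted to the neck scale $\epsilon$, which coincide with ordinary Sobolev norms away from the necks but enforce appropriate decay across them. Using cut-off functions, one decomposes an input $u\in C^\infty(\odot^2_0\Lambda^+)$ as a sum supported respectively near each piece $V_j$; applies the right inverses on the $V_j$ provided by unobstructedness (these exist because the cokernel $\ker(DW_+)^*$ is trivial on each summand); patches the resulting local solutions via cut-offs; and corrects the error, which is of order $\epsilon$ in operator norm, by a geometric series. Because the obstruction space is trivial, no global obstructions are introduced by the gluing. Once $P_\epsilon$ is constructed with $\|P_\epsilon\|\leq C$ independent of $\epsilon$, the Banach fixed-point theorem applied to $h\mapsto -P_\epsilon\bigl(W_+(g_\epsilon)+Q(h)\bigr)$ on a small ball in the weighted Sobolev space yields, for all sufficiently small $\epsilon$, a genuine solution $h_\epsilon$ with $\|h_\epsilon\|=O(\epsilon^2)$; elliptic regularity upgrades $h_\epsilon$ to a smooth tensor, so that $g_\epsilon+h_\epsilon$ is a smooth ASD metric.

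Finally, smooth dependence of $g_\epsilon+h_\epsilon$ on the full package of parameters---the chosen ASD conformal structures on the individual $V_j$, the gluing data $\Lambda_\alpha$, and $\epsilon$---follows from the implicit function theorem, since the construction of $g_\epsilon$, $P_\epsilon$ and the quadratic map $Q$ are all smooth in these data. The announced $C^2$-convergence is immediate: for any compact $\mathbf{K}\subset V_j\setminus\{p_\alpha,q_\alpha\}$, once $\epsilon$ is small enough that $\mathbf{K}$ is disjoint from the excised balls, the glued metric restricted to $\mathbf{K}$ equals $g_j+h_\epsilon$, and $h_\epsilon$ tends to zero in every $C^k(\mathbf{K})$ norm as $\epsilon\to 0$ by the weighted estimate together with standard interior elliptic regularity for the ASD equation.
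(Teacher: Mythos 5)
Your outline is the standard Taubes--Floer gluing scheme --- approximate solution with curvature concentrated in the necks, uniformly bounded right inverse for $DW_+$ built by patching the local inverses supplied by unobstructedness, contraction mapping, and smooth dependence on parameters --- which is precisely the argument of the sources the paper cites for this Proposition (Kovalev--Singer, Theorem C; cf.\ Floer and Taubes); the paper itself offers no proof beyond that citation. So your approach matches the intended one, with the usual caveat that the uniform estimate for the right inverse across the degenerating necks is the genuinely hard analytic step and is only sketched here.
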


We remark that the redundancy of parameterization 
alluded to above  only occurs if some of the $V_j$ carry 
conformal Killing vector fields, and   can  be eliminated by adding marked  points
to the picture.  
Note that this parameterization by no means describes the entire
moduli space of anti-self-dual metrics on $V$, about which
still know all too little; rather, it simply says that this moduli space
has a preferred end,  where we know quite a bit.

As sketched in \S \ref{stratego}, we now consider the orbifolds
$V_1= S^4/\ZZ_2$,  and $V_2= S^4/\ZZ_\ell$, $\ell \geq 3$, 
together with two copies 
$V_3$ and $V_4$ of the one-point compactification of the 
Eguchi-Hanson space $Y_2$, one copy $V_5$ of the compactified 
$A_{\ell-1}$ Gibbons-Hawking space $Y_\ell$, and one copy
$V_6$ of the compactification of the $c_1= -\ell$ line bundle over $\CP_1$
equipped 
with the LeBrun metric. We consider the ordinary connect sum
$V=V_1\# V_2$  centered at manifold points, and
 then take the generalized connect sum 
 $$M_\ell= V\#_{\ZZ_2} V_3\#_{\ZZ_2} V_4 \#_{\ZZ_\ell} V_5 \#_{\ZZ_\ell} V_6$$
 centered at appropriate pairs of singular points. Because we have eliminated 
 all the singular points, this gives 
 a smooth compact $4$-manifold, and it is moreover easy to see by 
 Seifert-van Kampen and 
 Mayer-Vietoris that $M_\ell$ is simply connected, and has $b_2=\ell + 2$. 
 By Proposition \ref{vgood} and Theorem \ref{sesame}, the hypotheses
 of Proposition \ref{gluing} are fulfilled, and we thus have

\begin{cor} \label{core} 
The gluing strategy proposed in \S \ref{stratego} 
does actually give rise, via Proposition \ref{gluing}, 
 to a connected  family of anti-self-dual metrics on the smooth
compact simply connected manifold $M_\ell$ with $b_2 = \ell + 2$, 
$\ell \geq 3$. Moreover, for any compact subset ${\mathbf K}$ of the complement
of the four singular points of $V$, there are embeddings ${\mathbf K}\hookrightarrow 
M$ on which some of these metrics are arbitrarily close, in the 
$C^2$ sense, to any one of the conformally flat metrics $h$ of
\S \ref{warmup}. On the other hand, one can also construct some 
metrics of this family by instead applying Proposition \ref{gluing}
to the orbifolds $V_1, \ldots  , V_6$, without forming the connect sum 
$V= V_1\# V_2$ as an intermediate step. 
\end{cor}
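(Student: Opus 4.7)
The plan is simply to check that the hypotheses of Proposition \ref{gluing} are satisfied by the collection $\{V, V_3, V_4, V_5, V_6\}$ (and alternatively by $\{V_1, V_2, V_3, V_4, V_5, V_6\}$) with the gluing data dictated by the picture in \S \ref{stratego}, then read off the statement. Topologically, since every singular point of each factor is paired off against a singular point of an ALE plug with matching local group, $M_\ell$ is a smooth compact $4$-manifold. A Seifert--van Kampen computation, using that each $V_j$ is simply connected and that each lens-space neck is killed by the adjacent plug, gives $\pi_1(M_\ell) = 1$; Mayer--Vietoris, combined with $b_2(V) = 0$, $b_2(V_3) = b_2(V_4) = b_2(V_6) = 1$, and $b_2(V_5) = \ell - 1$, yields $b_2(M_\ell) = \ell + 2$ as claimed.

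The substantive input for Proposition \ref{gluing} is that each building block carries an unobstructed ASD conformal structure. For $V = V_1 \# V_2$ this is precisely the content of Proposition \ref{vgood}, applied to the family of flat conformal structures produced in Section \ref{moduli}. For each of the one-point compactifications $V_3, V_4, V_5, V_6$ of the ALE scalar-flat K\"ahler manifolds of \S \ref{hyper} and \S 4.2, unobstructedness is furnished by Theorem \ref{sesame}. The gluing data themselves consist of one pair of smooth points (one each in $V_1$ and $V_2$, used to form $V$) together with four pairs of singular points, where the key point is to choose the orientation-reversing linear identification $\Lambda_\alpha$ so that the two relevant local $\ZZ_\ell$-representations match: the scalar $\ZZ_\ell$-singularity of $V_2$ (generated by $q\mapsto e^{2\pi i/\ell}q$) is paired with the apex of the LeBrun plug $V_6$, while the twisted singularity of $V_2$ (generated by $q\mapsto q e^{-2\pi i/\ell}$) is paired with the apex of the Gibbons-Hawking plug $V_5$, as was arranged precisely for this purpose in \S \ref{stratego}; the two $\ZZ_2$-singularities of $V_1$ are paired with the apices of $V_3$ and $V_4$ in the same fashion, but this case is easier because the two $\ZZ_2$-representations are already equivalent.

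Plugging this into Proposition \ref{gluing} immediately yields a smoothly parameterized family of ASD conformal metrics on $M_\ell$; connectedness follows from connectedness of the parameter space constructed in \S \ref{moduli} together with the fact that the remaining gluing parameters live in a connected space of scales and framings. The claimed $C^2$-approximation on compact subsets ${\mathbf K}\subset V$ disjoint from the four singular points is exactly the last clause of Proposition \ref{gluing}. For the alternative version, one replays the argument with all six orbifolds taken separately; the only additional check is unobstructedness at the round conformally flat metrics on $V_1 = S^4/\ZZ_2$ and $V_2 = S^4/\ZZ_\ell$, which follows from the observation that any element of $\ker(DW_+)^*$ on $S^4/\Gamma$ lifts to a $\Gamma$-invariant element on the round $S^4$, where rigidity of the round sphere forces such an element to vanish. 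Since the substantive analytic work has already been discharged by Proposition \ref{vgood} and Theorem \ref{sesame}, I expect nothing further beyond this bookkeeping.
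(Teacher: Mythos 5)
Your argument is correct and is essentially the paper's own: the paper simply asserts the Seifert--van Kampen and Mayer--Vietoris computations and invokes Proposition \ref{vgood} and Theorem \ref{sesame} to verify the hypotheses of Proposition \ref{gluing}, exactly as you do. The one detail you supply that the paper leaves implicit --- unobstructedness at the round conformally flat structures on $V_1=S^4/\ZZ_2$ and $V_2=S^4/\ZZ_\ell$ for the alternative construction, obtained by lifting an element of $\ker (DW_+)^*$ to the round $S^4$ --- is handled correctly and in the same spirit as the lifting argument used in Proposition \ref{vgood}.
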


Note that, in the complex 
coordinate on ${\mathcal T}$ described in \S \ref{coordinate} , 
direct construction from $V_1, \ldots  , V_6$ corresponds to the region
$z \approx 1$, where $V= V_1\# V_2$  has an extremely long neck separating the 
two orbifold points  of type  $\ZZ_\ell$ from the two singularities of type $\ZZ_2$. 
\subsection{Negative Scalar Curvature}

Let us now show that some of the
conformal structures of Corollary \ref{core} have representatives
of negative scalar curvature.

To see this, recall that if   the 
lowest eigenvalue of the Yamabe Laplacian of a compact Riemannian 
manifold is negative, the metric can then be conformally rescaled  to yield a metric of 
negative scalar curvature simply by multiplying it by the square of the corresponding
eigenfunction. In dimension $4$, it therefore suffices to show that 
$$ \inf_{u\in L^2_1} \frac{\int [6|\nabla u|^2 + s u^2 ] d\mu}{\int u^2d\mu}$$
is negative; in particular, it suffices to display some smooth test function 
$u$ such that 
$$\int_M [6|\nabla u|^2 + s u^2 ] d\mu_g < 0.$$
Note that this test function need not  be positive everywhere. 

We saw in \S \ref{warmup} that 
$V= V_1\# V_2$ admits  conformally flat orbifold metrics of negative scalar curvature.
By rescaling , we can therefore find such metrics with scalar curvature $s< -1$. 
Let  $u$  be a function $V$ which is 
identically zero on the $(\varepsilon/2)$-balls about the four oribifold
singularities, equal to $1$ outside the corresponding $(\varepsilon)$-balls,
and has $|\nabla u|< 3/\varepsilon$ in the transition annuli.  We further take 
$\varepsilon\in (0,1)$ to be small enough so that the total volume of these
four  (orbifold)  
$\varepsilon$-balls is  less than $C\varepsilon^4$ where e.g. 
 $C=2\pi^2$. With respect to the background metric $h$, we then 
 have 
 $$\int_Y [6|\nabla u|^2 + s u^2 ] d\mu_h < 54 C \varepsilon^2 - [\Vol (Y) - C \varepsilon^4] < 
 55 C \varepsilon^2 - \Vol (Y)$$
 which is negative  for   $\varepsilon$ is sufficiently small. 
 For such an $\varepsilon$, 
let ${\mathbf K}\subset V$ be the complement of the four $(\varepsilon/2)$-balls
about the orbifold points. 
  Corollary \ref{core} then tells us that there exists a family of 
 representative metrics $g_j$ for some of our conformal structures 
 on $M_\ell$, together with a sequence of diffeomorphisms 
 $\Phi_j : {\mathbf K} \hookrightarrow M_\ell$, such that $\Phi_j^* g_j \to h$ in $C^2$ 
 on ${\mathbf K}$. In particular, 
  $$\int_{\mathbf K} [6|\nabla u|^2 + s u^2 ] d\mu_{\Phi_j^* g_j} < 0$$
  for sufficiently large $j$. For some such $j$, we  can thus construct
  a test function $\hat{u}$ on $M_\ell$ by extending the push-forward
 $u \circ \Phi_j^{-1}$ by zero, and this test function 
 then satisfies
  $$\int_{M_\ell} [6|\nabla \hat{u}|^2 + s \hat{u}^2 ] d\mu_{g_j} < 0.$$
  Thus 
  $(M,[g_j])$ can  be
 conformally rescaled to yield a metric of negative scalar curvature, and we have:

\begin{lem} 
\label{trombone} 
For each $\ell\geq 3$, 
some of the anti-self-dual conformal classes  on $M_\ell$ constructed in Corollary 
\ref{core} can be represented by Riemannian 
metrics of negative scalar curvature. 
\end{lem}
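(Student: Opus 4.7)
The plan is to exploit the variational characterization of the first Yamabe eigenvalue: on a compact oriented Riemannian 4-manifold $(M,g)$, the conformal class $[g]$ contains a metric of negative scalar curvature precisely when
$$
\inf_{u\in L^2_1, u\not\equiv 0} \frac{\int_M [6|\nabla u|^2 + s_g u^2]\, d\mu_g}{\int_M u^2 \, d\mu_g} < 0,
$$
and this in turn follows as soon as one exhibits a single test function (not necessarily positive) that makes the numerator negative. The task therefore reduces to producing such a test function for some metric $g_j$ in the gluing family of Corollary \ref{core}.

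The first step is to transplant the argument to the orbifold $V = V_1 \# V_2$, where the background geometry is explicit. By the warm-up section \S \ref{warmup}, $V$ carries a locally conformally flat orbifold metric $h$ of negative scalar curvature, which after rescaling I may assume satisfies $s_h < -1$. I would then construct a cutoff function $u$ on $V$ that vanishes inside the $(\varepsilon/2)$-balls around the four orbifold points, equals $1$ outside the $\varepsilon$-balls, and has $|\nabla u| < 3/\varepsilon$ in the transition annuli. Since the volume of each orbifold $\varepsilon$-ball is of order $\varepsilon^4$, the gradient contribution to the Yamabe integral is $O(\varepsilon^{-2} \cdot \varepsilon^4) = O(\varepsilon^2)$, while the potential contribution is bounded above by $-[\operatorname{Vol}(V) - O(\varepsilon^4)]$. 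For $\varepsilon$ small enough the integral is therefore strictly negative on $V$ with respect to $h$.

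The second step is to transfer this test function to $M_\ell$. Let $\mathbf{K} \subset V$ denote the complement of the four $(\varepsilon/2)$-balls, so that $u$ is supported in $\mathbf{K}$. Corollary \ref{core} furnishes, for this fixed $\mathbf{K}$, a sequence of anti-self-dual conformal classes on $M_\ell$ with representative metrics $g_j$ and diffeomorphisms $\Phi_j : \mathbf{K} \hookrightarrow M_\ell$ such that $\Phi_j^* g_j \to h$ in $C^2$ on $\mathbf{K}$. Let $\hat u$ on $M_\ell$ be the push-forward $u \circ \Phi_j^{-1}$ extended by zero outside $\Phi_j(\mathbf{K})$; since $u$ and $\nabla u$ both vanish near $\partial \mathbf{K}$, the extension is smooth and supported in $\Phi_j(\mathbf{K})$. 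The $C^2$-convergence of $\Phi_j^* g_j$ to $h$ implies that both the scalar curvatures and volume elements converge uniformly on $\mathbf{K}$, so
$$
\int_{M_\ell} [6|\nabla \hat u|^2 + s_{g_j} \hat u^2]\, d\mu_{g_j} \longrightarrow \int_{\mathbf{K}} [6|\nabla u|^2 + s_h u^2]\, d\mu_h < 0,
$$
and the integral on the left is therefore strictly negative for all sufficiently large $j$. Hence $[g_j]$ has negative first Yamabe eigenvalue for such $j$, and so contains a metric of negative scalar curvature.

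The only real subtlety is to make the two approximation regimes compatible: the cutoff parameter $\varepsilon$ must be chosen before invoking Corollary \ref{core} (so that $\mathbf{K}$ is fixed), and we need the $C^2$ convergence to be strong enough to control both $|\nabla u|^2 \, d\mu$ and $s \cdot u^2 \, d\mu$ uniformly. Both points are immediate from the statement of the corollary, so no new analytic input is required; the argument is essentially a standard cut-and-paste of the Yamabe test function and should encounter no genuine obstacle.
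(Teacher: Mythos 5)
Your proposal is correct and follows essentially the same route as the paper's own proof: a cutoff test function on $V=V_1\# V_2$ built from the negative-scalar-curvature conformally flat orbifold metric of \S\ref{warmup}, with the gradient term controlled by the $O(\varepsilon^4)$ volume of the excised orbifold balls, transplanted to $M_\ell$ via the $C^2$-convergence statement of Corollary \ref{core} and extended by zero. No differences of substance.
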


Note that the gist of this argument is  quite soft, and widely applicable. The moral is that negative
scalar curvature on one summand can be used to generate negative scalar curvature
on a generalized connected sum; cf. \cite{joyscal}. 

\subsection{Positive Scalar Curvature}

We next show that other conformal classes in our connected family
have representatives of positive scalar curvature. Results of 
 Joyce \cite{joyscal}   indicate that this can be done by 
 a direct argument in the spirit of the proof of Lemma \ref{trombone};
however,    the positive case in this setting 
 is technically quite delicate. Instead, we will proceed here via
 an entirely different route,  exploiting the considerable body of
 information concerning the spaces in question. 
 
 To this end, let us instead consider the 
 manifolds 
 $$N= V_3 \#_{\ZZ_2} V_1 \#_{\ZZ_2} V_4 =  V_3 \#_{\ZZ_2} V_4$$
 and
 $$N_\ell = V_5  \#_{\ZZ_\ell} V_2 \#_{\ZZ_\ell} V_6  = V_5 \#_{\ZZ_\ell} V_6.$$
 In both these cases, Proposition \ref{gluing} allows us to 
 deduce the existence of ASD conformal structures built out of the 
 given building blocks. However, one knows families of explicit ASD metrics of 
 with the required degenerations on  
 these manifolds;  some of these metrics 
 admit semi-free circle actions, and arise from the hyperbolic ansatz  \cite[pp. 235--237]{mcp2},
 and the relevant degenerations involve bringing the centers together until
 they collide.
In particular, this explicitly identifies $N$ as the smooth manifold
$2\overline{\CP}_2$, and $N_\ell$ as $\ell\overline{\CP}_2$. 
Moreover, all of these explicit conformal classes are known 
\cite[p. 235]{mcp2}  to admit representatives of positive scalar curvature. 
Some of the metrics of our family are therefore obtained by 
directly applying Proposition \ref{gluing} to glue together some 
of these 
positive-scalar-curvature ASD metrics  on $N$ and $N_\ell$. 
For such ordinary connect sums, however,
the gluing procedure can be realized by the Donaldson-Freedman
construction \cite{DF}, and in this context, a theorem of Atiyah on the
 the Penrose Transforms of Yamabe Green's  functions allows one  to show
 \cite{loptimal,kalafat}
that connected sums of positive-scalar-curvature ASD manifolds with small
gluing parameters again have representatives of positive scalar curvature. 
This shows  the following:

\begin{lem} 
\label{trombino} 
For each $\ell\geq 3$, $M_\ell$ is diffeomorphic to $(\ell+2)\overline{\CP}_2$, and 
some of the anti-self-dual conformal classes  on $M_\ell$ constructed in Corollary 
\ref{core} can be represented by Riemannian 
metrics of positive scalar curvature. 
\end{lem}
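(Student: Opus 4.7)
The plan is to decompose the problem into two separate building steps and a final ordinary connect-sum step, exploiting the fact that Proposition \ref{gluing} gives flexibility in how we group the orbifolds $V_1,\dots,V_6$ together. First, I would form the two intermediate orbifold-connect-sum manifolds
$$N = V_3 \#_{\ZZ_2} V_1 \#_{\ZZ_2} V_4, \qquad N_\ell = V_5 \#_{\ZZ_\ell} V_2 \#_{\ZZ_\ell} V_6,$$
and then realize $M_\ell$ as the ordinary (smooth-point) connect sum $N \# N_\ell$. For the topological identification, I would use the explicit families of ASD metrics on $N$ and $N_\ell$ constructed from the hyperbolic ansatz of \cite{mcp2}: these come with a semi-free $S^1$-action and carry parameters corresponding to the positions of the monopole centers, and the degenerations arising when centers collide match precisely the orbifold degenerations used here. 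This identification pins down $N \cong 2\overline{\CP}_2$ and $N_\ell \cong \ell\overline{\CP}_2$, so smooth topology immediately gives $M_\ell \cong (\ell+2)\overline{\CP}_2$.

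Next, I would deal with the scalar curvature. By \cite{mcp2}, the explicit hyperbolic-ansatz ASD conformal classes on $N$ and $N_\ell$ admit representatives with $s>0$. Via Corollary \ref{core}, some of our family of ASD conformal structures on $M_\ell$ is obtained from these particular conformal structures on $N$ and $N_\ell$ by applying Proposition \ref{gluing} to a single pair of smooth (non-orbifold) points, one in each summand. This is the key reorganization: rather than gluing all six orbifolds simultaneously, we view the construction as first producing ASD metrics on $N$ and $N_\ell$ and then forming an ordinary connect sum.

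For this last ordinary connect sum step, the twistor-theoretic machinery applies: since both points being glued are smooth, the gluing can equivalently be carried out at the level of twistor spaces via the Donaldson-Freedman construction \cite{DF}. Into this setting one can plug Atiyah's result, used in \cite{loptimal,kalafat}, on the Penrose transforms of Yamabe Green's functions, which shows that the connect sum of two positive-scalar-curvature ASD $4$-manifolds with sufficiently small gluing parameter again admits a representative with $s>0$. Applying this to the sum $N \# N_\ell$ with small neck produces the desired conformal classes in our family with positive scalar representatives.

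The main obstacle is really making sure that the parameter space in Proposition \ref{gluing}, applied in this two-stage fashion, genuinely lands inside the connected family from Corollary \ref{core}; this is why the final sentence of Corollary \ref{core} is stated the way it is, allowing us to bypass the intermediate step of forming $V = V_1\# V_2$ and instead glue the six orbifolds in any grouping we like. Once that flexibility is invoked, the positive-scalar-curvature conclusion becomes purely a matter of citing the Atiyah Green's-function argument. A secondary technical point worth checking is that the hyperbolic-ansatz metrics on $N$ and $N_\ell$ satisfy the deformation-unobstructedness hypothesis required by Proposition \ref{gluing} in order for them to serve as bona fide endpoints of our family; this follows from Proposition \ref{vgood} together with Theorem \ref{sesame} applied to the ALE summands $V_3, V_4, V_5, V_6$.
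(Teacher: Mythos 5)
Your proposal is correct and follows essentially the same route as the paper: the same regrouping of the six building blocks into $N = V_3 \#_{\ZZ_2} V_1 \#_{\ZZ_2} V_4$ and $N_\ell = V_5 \#_{\ZZ_\ell} V_2 \#_{\ZZ_\ell} V_6$, the same appeal to the explicit hyperbolic-ansatz metrics of \cite{mcp2} (with colliding centers) to identify $N \cong 2\overline{\CP}_2$ and $N_\ell \cong \ell\overline{\CP}_2$ and to supply positive-scalar-curvature representatives, and the same Donaldson--Friedman/Atiyah Green's-function argument for the final ordinary connect sum. Your closing remarks about invoking the flexibility in the last sentence of Corollary \ref{core} correctly identify why that statement is included in the paper.
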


\subsection{The Main Theorems} 

Combining Lemmas \ref{trombone} and \ref{trombino}, we now have a smooth
connected 
family of anti-self-dual metrics on the connected sum  $(\ell+2)\overline{\CP}_2$,
$\ell \geq 3$, such that some of the conformal classes are represented 
by metrics of positive scalar curvature, and others are represented
by metrics of negative scalar curvature. 
Choosing a smooth path in our parameter space which joins 
a conformal class of the first type to one of the second, 
 and setting $k = \ell +2$, we 
have therefore proved

\setcounter{main}{1}
\begin{main}
For any integer $k\geq 5$, the smooth  oriented $4$-manifold 
$k\overline{\bcp}_2$ 
admits
a smooth $1$-parameter family of anti-self-dual  Riemannian  metrics $g_t$, 
$t\in [-1,1]$,
such that $[g_1]$ contains a metric of positive scalar curvature, while
$[g_{-1}]$ contains a metric of negative scalar curvature.
\end{main}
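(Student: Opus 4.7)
The plan is to assemble the theorem from the pieces that have already been put in place. By Corollary \ref{core}, for each $\ell \geq 3$ we have a \emph{connected} family of anti-self-dual conformal structures on $M_\ell$, smoothly parameterized by unobstructed ASD structures on the building blocks $V_1,\dots,V_6$ together with the allowed gluing data. Lemma \ref{trombino} identifies $M_\ell$ with the smooth manifold $(\ell+2)\overline{\bcp}_2$, so setting $k=\ell+2$ this is a family on the target manifold of the theorem. So the strategy is simply: pick one member of the family supplied by Lemma \ref{trombino} (positive scalar curvature) and one supplied by Lemma \ref{trombone} (negative scalar curvature), then interpolate.

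First I would fix $\ell \geq 3$ and let $\mathcal{P}$ denote the parameter space of Corollary \ref{core}. This space is manifestly path-connected: the underlying parameters are the ASD deformations of each $V_j$ (whose deformation spaces are connected near the flat / K\"ahler reference structures, by Proposition \ref{vgood} and Theorem \ref{sesame}), the choice of small gluing balls, and the choice of orientation-reversing conformal identifications $\Lambda_\alpha$, all of which vary in connected (indeed smooth) families. Lemma \ref{trombino} produces a point $p_+ \in \mathcal{P}$ whose associated ASD metric $g_{p_+}$ has $[g_{p_+}]$ containing a representative of positive scalar curvature; Lemma \ref{trombone} produces a point $p_- \in \mathcal{P}$ whose class $[g_{p_-}]$ contains a representative of negative scalar curvature.

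Next I would choose a smooth embedded path $\gamma:[-1,1]\to \mathcal{P}$ with $\gamma(-1)=p_-$ and $\gamma(1)=p_+$, and set $[g_t]:=[g_{\gamma(t)}]$. By the smoothness clause in Proposition \ref{gluing} (inherited by Corollary \ref{core}), $t\mapsto [g_t]$ is a smooth $1$-parameter family of ASD conformal classes on $M_\ell\cong k\overline{\bcp}_2$. Within each conformal class one picks any convenient smooth representative metric $g_t$; at the endpoints $t=\pm 1$ the conformal classes contain, by construction, metrics of the required signs of scalar curvature, which is all the theorem asks for.

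The main (and essentially only) obstacle is verifying that the parameter space $\mathcal{P}$ is genuinely path-connected and that the two existence lemmas produce points of $\mathcal{P}$ that lie in the \emph{same} connected component. Lemma \ref{trombino} constructs its positive-scalar-curvature examples by applying the gluing proposition directly to the six pieces $V_1,\dots,V_6$ (corresponding, in the $\mathcal{T}$-coordinate of \S\ref{coordinate}, to the region $z\approx 1$ with a very long neck in $V=V_1\#V_2$), while Lemma \ref{trombone} works with a generalized connect sum built on conformally flat metrics on $V$ of negative scalar curvature (corresponding to a different region of $\mathcal{T}$). Both, however, arise from the single connected moduli produced in Corollary \ref{core}, since the parameter $z\in\mathcal{T}$ varies continuously through the deformations constructed in Section \ref{moduli} and the remaining gluing data can be moved continuously without leaving $\mathcal{P}$. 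Once this is spelled out, the theorem follows at once.
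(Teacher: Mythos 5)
Your proposal is correct and takes essentially the same route as the paper: combine Lemma \ref{trombone} and Lemma \ref{trombino} with the connectedness asserted in Corollary \ref{core}, and choose a smooth path in the parameter space joining a conformal class with a negative-scalar-curvature representative to one with a positive-scalar-curvature representative. The connectedness worry you flag at the end is precisely the point the paper pre-empts with the closing remark of Corollary \ref{core}, namely that the metrics obtained by gluing $V_1,\dots,V_6$ directly (the positive case) lie in the same connected family as those built via the intermediate connected sum $V=V_1\# V_2$ (the negative case).
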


Now let $\lambda_t$ be the smallest eigenvalue of the  Yamabe Laplacian
$$ \Delta+ \frac{s}{6}$$ for the metric $g_t$.
By the minimum principle, any corresponding eigenfunction $u_t$ must be everywhere non-zero. 
 Hence  
$\lambda_t$ has multiplicity $1$, and so varies
continuously with $t$. However, we know that $\lambda_{-1} < 0$, and 
$\lambda_{+1} > 0$, so the intermediate value theorem predicts the existence of
some $t_0\in [-1,1]$ such that $\lambda_{t_0}=0$. Letting $u_{t_0}$ be the
corresponding positive eigenfunction, the ASD metric 
$g= u_{t_0}^2g_{t_0}$ then has scalar curvature $s\equiv 0$. This
proves

\begin{thm}
For each $k\geq 5$, the smooth compact oriented $4$-manifold
$$k\overline{\bcp}_2=\underbrace{\overline{\bcp}_2\# \cdots \# \overline{\bcp}_2}_k$$ 
admits scalar-flat anti-self-dual Riemannian metrics. In particular, each of these 
spaces carries optimal metrics, in the sense of Definition \ref{defopt}. 
\end{thm}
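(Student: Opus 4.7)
My plan is to deduce this directly from Theorem B (the preceding \texttt{main} statement) together with the continuity properties of the lowest eigenvalue of the Yamabe Laplacian, which has already been exploited at the orbifold level in the proof of the proposition in \S\ref{warmup}. The anti-self-dual condition is conformally invariant, so any conformal rescaling of an ASD metric remains ASD; thus it is enough to produce, somewhere in the $1$-parameter family $g_t$ of Theorem B, a conformal class that contains a representative with $s \equiv 0$. Once such a representative is found, the Gauss--Bonnet/signature discussion in \S\ref{rud} shows that the metric is automatically an absolute minimizer of $\mathcal{K}$, hence optimal in the sense of Definition \ref{defopt}.

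The key analytic input is Trudinger's observation, already recorded in the paper: for a metric $g$ on a compact $4$-manifold the sign of the smallest eigenvalue $\lambda$ of the Yamabe Laplacian $\Delta_g + s_g/6$ agrees with the sign of the scalar curvature of the (unique up to constants) metric of constant-sign scalar curvature in $[g]$. Concretely, if $u>0$ is the corresponding eigenfunction, then $\hat g = u^2 g$ has $\hat s = 6\lambda u^{-2}$. So the plan is to define $\lambda_t$ as the smallest eigenvalue of $\Delta_{g_t} + s_{g_t}/6$ and track the sign of $\lambda_t$ as $t$ ranges over $[-1,1]$.

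The next step is to argue that $\lambda_t$ depends continuously on $t$. The minimum principle (applied to a would-be second eigenfunction of the smallest eigenvalue, which would have to change sign but could not) forces $\lambda_t$ to be a simple eigenvalue, and standard perturbation theory for a smooth family of self-adjoint elliptic operators on a fixed compact manifold then yields continuous (indeed smooth) dependence of $\lambda_t$ on $t$. By the Trudinger principle, $\lambda_1 > 0$ because $[g_1]$ contains a metric of positive scalar curvature, and $\lambda_{-1} < 0$ because $[g_{-1}]$ contains a metric of negative scalar curvature. Applying the intermediate value theorem produces some $t_0 \in (-1,1)$ with $\lambda_{t_0} = 0$.

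Finally, I would take the associated positive eigenfunction $u_{t_0}$ and set $g := u_{t_0}^2 g_{t_0}$. The eigenvalue equation $(\Delta_{g_{t_0}} + s_{g_{t_0}}/6) u_{t_0} = 0$ forces $s_g \equiv 0$ via the conformal transformation law for scalar curvature recorded earlier in the paper. Since $g_{t_0}$ is ASD and ASD is conformally invariant, $g$ is a scalar-flat anti-self-dual metric on $k\overline{\bcp}_2$. The optimality conclusion then follows from the identity
\[
{\mathcal K}(g) = -8\pi^2(\chi + 3\tau)(M) + 2\int_M\!\left(\tfrac{s^2}{24}+2|W_+|^2\right)d\mu_g
\]
from \S\ref{rud}, which attains its topological lower bound on the SFASD locus. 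There is essentially no hard step in this argument: Theorem B does all the geometric work, and the only thing to verify carefully is that continuity of $\lambda_t$ is not spoiled by any regularity issues in the family $g_t$ — but since the family is smooth on a fixed smooth manifold and the Yamabe Laplacian is a smooth family of formally self-adjoint elliptic operators with simple lowest eigenvalue, this is routine.
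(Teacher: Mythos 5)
Your proposal is correct and follows essentially the same route as the paper: the paper likewise tracks the lowest eigenvalue $\lambda_t$ of the Yamabe Laplacian along the family of Theorem B, uses the minimum principle to get simplicity and hence continuity, applies the intermediate value theorem to find $\lambda_{t_0}=0$, and conformally rescales by the positive eigenfunction to obtain the SFASD metric. The optimality conclusion via the Gauss--Bonnet/signature identity is exactly the argument of \S 1.1, so nothing is missing.
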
 

Combining this with the results of Yau \cite{yauma} on $K3$ and 
Rollin-Singer \cite{rollsing} on $\CP_2 \# k\overline{\bcp}_2$, we 
 have therefore proved Theorem \ref{able}, as promised. 

\vfill

\noindent
{\footnotesize Authors' address: Mathematics Department, SUNY, Stony Brook, NY 11794-3651.}

\noindent
{\footnotesize Keywords: $4$-manifold, anti-self-dual metric, scalar curvature, Kleinian group.}

\noindent
{\footnotesize Mathematics Subject Classification: 53C25 (primary), 30F40 (secondary).}

\pagebreak

\end{document}